\crefname{hypothesis}{Hypothesis}{Hypotheses}
\title{Decoupling PDE Computation with Intrinsic {or Inertial} Robin Interface Condition\thanks{Submitted to the editors DATE.
\funding{This work was funded by the Hong Kong RGC Competitive Earmarked Research Grant HKUST16301218 and NSFC (91530319,11772281).}}}
\author{
Mo Mu \thanks{Department of Mathematics, Hong Kong University of Science and Technology
	(\email{mamu@ust.hk}).}
\and 
Lian Zhang\thanks{Department of Mathematics, Hong Kong University of Science and Technology
	(\email{lzhangay@ust.hk}).}
}
\begin{document}
	
	\maketitle
	
	% REQUIRED
	\begin{abstract}
	We study decoupled numerical methods for multi-domain, multi-physics applications.
	By investigating various stages of numerical approximation and decoupling and tracking how the information is transmitted across the interface for a typical multi-modeling model problem, we derive an approximate intrinsic or inertial type Robin condition for its semi-discrete model.
	This new interface condition is justified both mathematically and physically in contrast to the classical Robin interface condition conventionally introduced for decoupling multi-modeling problems. Based on the intrinsic or inertial Robin condition, an equivalent semi-discrete model is introduced, which provides a general framework for devising effective decoupled numerical methods. Numerical experiments also confirm the effectiveness of this new decoupling approach.
	\end{abstract}
	
	% REQUIRED
	\begin{keywords}
		Multi-domain, Multi-physics Models; Coupled PDEs; Interface Coupling Conditions; Decoupling; Intrinsic or Inertial type Robin Condition; Decoupled Numerical Methods.
	\end{keywords}
	
	% REQUIRED
	\begin{AMS}
		65N30, 65M55, 74F10
	\end{AMS}
	\section{Introduction}
	Multi-domain, multi-modeling problems find important applications in science and engineering. They usually involve coupled processes or systems in more than one geometric domains or physical fields, and arise from classical parallel computing, domain decomposition, to contemporary multi-scale and multi-physics computation.
	Mathematically, they are modeled by different partial differential equations(PDEs) in different domains with certain interface coupling conditions, such as fluid-structure interactions (FSI) \cite{crosetto2011fluid,leng2019numerical,turek2006proposal}, coupling fluid flow with porous media flow \cite{badea2010numerical,cao2010coupled,discacciati2009navier}, etc..
	
	%{Monolithic V.S. partitioned schemes}
	Numerical methods for multi-domain, multi-modeling problems are generally classified as monolithic and partitioned approaches.
	The monolithic approach usually leads to coupled schemes \cite{barker2010scalable,riviere2005analysis,riviere2005locally,wick2013solving}. Although unconditionally stable and convergent in general, those coupled schemes usually result in significant difficulties and inflexibility in the design and choice of mesh generation, PDE discretization, algebraic solvers, as well as software development.
	
	On the other hand, certain partitioned approaches, often called loosely coupled, explicit coupling, or decoupled schemes, have also been investigated in the literature \cite{badia2008fluid,bukac2016stability,cai2009numerical,canic2012stability,discacciati2007robin,fernandez2013explicit,fernandez2015generalized,mu2007two,mu2010decoupled}, where the sub-model in each domain is solved locally so that existing legacy solvers may be applied by easy software integration. There are many other important physical and numerical considerations that appeal to partitioned approaches for treating different sub-models in their own physical regions independently, particularly because of the very different physical properties and time scales.
	However, decoupled approaches are usually theoretically much more difficult. Decoupling could easily lead to numerical instability or non-convergence. For instance, many decoupled methods in FSI applications are so-called unconditionally instable due to the artificial added-mass effect \cite{causin2005added,forster2007artificial}. Even with classical non-overlapping domain decomposition,
	for a long time people have been still searching for and trying to understand how to decouple the computation effectively to ensure stability and convergence.
	The most typical and conventional approach is to match the given interface conditions, for instance the continuity of the Dirichlet and Neumann data, in one way or another by
	solving local sub-models with certain given boundary conditions and then exchanging the information across the interface in certain ad hoc or intuitive way, which often leads to
	decoupled methods of the Dirichlet-Dirichlet or Dirichlet-Neumann type. These methods are sometimes stable and convergent, sometime not, or not very efficient.
	A Robin type condition has been introduced by combining the original Dirichlet and Neumann interface conditions to obtain a set of mathematically equivalent interface conditions, which results in some other types of decoupled methods such as Dirichlet-Robin, Neumann-Robin, and Robin-Robin type of schemes. Sometimes, this may help to improve the effectiveness and efficiency.
	However, such an approach is artificial and lack of physical justification, which results in both theoretical and practical numerical difficulties for the Robin type decoupling approach.
	For instance, in applications like FSI the Dirichlet data correspond to velocity, while the Neumann data correspond to interfacial force, therefore, there is a mismatch between these two different types of physical quantities. In the classical Robin approach, this physical mismatch is somehow meredied by mathematically introducing a relaxation parameter, which is to be tuned for the purposes of numerical stability and convergence. The optimal relaxation parameter is then compensated for the mathematical effect of the physical mismatch. However, the optimal parameter analysis is often very difficult, even for certain simple model problems. The choice and tuning of the relaxation parameter with the existing numerical methods are practically very difficult and even impossible for most of the real applications, never mentioning the theoretical analysis and understanding of the numerical stability and convergence.
	
	In an interesting FSI application of a thin-wall structure coupled with a fluid flow, a so-call intrinsic Robin condition was introduced with certain formal derivation, which leads to a stable decoupled scheme and overcomes the added-mass instability effect \cite{fernandez2013explicit,fernandez2015generalized}. Motivated by this, we believe that this could provide certain insight
	and help understand the interface mechanism, as well as provide guidance for developing decoupling methodology for general multi-domain and multi-physics applications.
	
	We investigate a typical multi-modeling model problem. By cruising various stages of numerical approximation and decoupling, and track how the information is transmitted across the interface, we clearly demonstrate how the interface conditions evolve and affect the data and error transmission across the interface during spacial discretization, time discretization, as well as decoupling.
	An approximate intrinsic or inertial type Robin condition is rigorously derived for a semi-discrete model with finite element spacial discretization,
	which makes sense both mathematically and physically in contrast to the classical Robin condition.
	Based on this new interface condition, an equivalent semi-discrete model is introduced, which provides a general framework for devising effective decoupled numerical methods.
	Numerical experiments also confirm the effectiveness of this new decoupling approach.

	%{ Paper organization}
	The paper is organized as follows. The multi-domain model problem is described in Section 2. The derivation and analysis of the new intrinsic or inertial type Robin interface condition are carried out in Section 3. The decoupling approach based on the intrinsic or inertial type Robin interface condition is introduced in Section 4, followed by numerical experiments in Section 5. Concluding remarks are given in Section 6.
	
	\section{Model Problem}
	We consider a scalar coupled model consisting of two heat equations in two adjacent subdomains coupled through interface conditions.
	Let $\Omega_1=(0,1)\times(0,1)$ and $\Omega_2=(1,2)\times(0,1)$ be the two subdomains, with the interface $\Gamma=\partial\Omega_1\cap\partial\Omega_2$ as showed in Figure \ref{ComputationalDomains}, where
	${\bm n}_1$ and ${\bm n}_2$ are the unit outward normal vectors to $\partial\Omega_1$ and $\partial\Omega_2$, respectively.
	
	\begin{figure}[htbp!]
		\centering
		\includegraphics[scale=0.40]{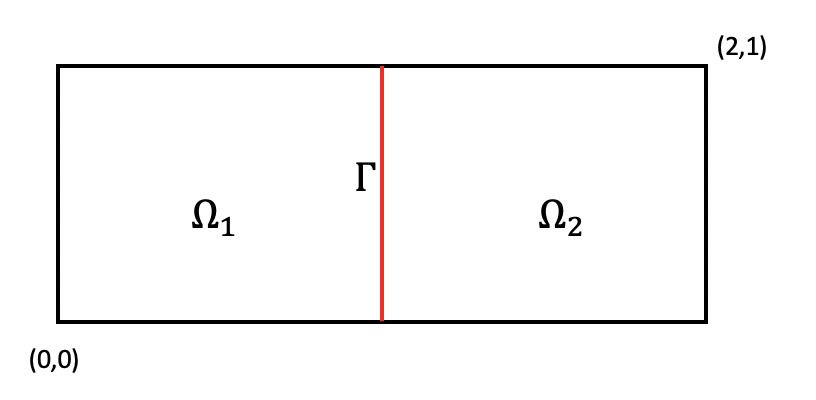}
		\caption{Computational Domains with Interface}
		\label{ComputationalDomains}
	\end{figure}
	
	The coupled PDE model is described as:
	\begin{eqnarray}
	\rho_1\frac{\partial u_1}{\partial t} - \nabla \cdot (\beta_1 \nabla u_1) = f_1, &\mbox{~~in~~}& \Omega_1,\label{Equation1}\\
	\rho_2\frac{\partial u_2}{\partial t} - \nabla \cdot (\beta_2 \nabla u_2) = f_2, &\mbox{~~in~~}& \Omega_2,\label{Equation2}\\
	u_1  = u_2, &	\mbox{~~on~~}& \Gamma,\label{InterfaceCondition1}\\
	\beta_1 \nabla u_1 \cdot {\bm n}_1 + \beta_2 \nabla u_2 \cdot {\bm n}_2=q, &\mbox{~~on~~}& \Gamma\label{InterfaceCondition2},\\
	u_1 = u_{1D}, &\mbox{~~on~~}& \partial\Omega_1\backslash\Gamma\label{BoundaryCondition1},\\
	u_2 = u_{2D}, &\mbox{~~on~~}& \partial\Omega_2\backslash\Gamma\label{BoundaryCondition2},
	\end{eqnarray}
	where $u_1$ and $u_2$ are temperature, $\rho_1$ and $\rho_2$ are density, $\beta_1$ and $\beta_2$ are material diffusivity, $f_1$ and $f_2$ are external source, and  the Dirichlet and Neumann conditions are imposed on the interface by (\ref{InterfaceCondition1}) and (\ref{InterfaceCondition2}).
	For illustration and without loss of generality, we will assume $q=0$.
	
	It serves as a typical model for investigating decoupling techniques for various important applications, bridging classical domain decomposition and parallel computing with contemporary multi-physics computing.
	When $\rho_1 = \rho_2$ and $\beta_1 = \beta_2$, this coupled model is equivalent to a underline single model of a heat equation defined over the global domain, being partitioned to subdomain problems with non-overlapping domain decomposition so that various domain decomposition time marching schemes or iterative methods may be devised for parabolic or elliptic problems with parallel computing.
	When $\beta_1 \neq \beta_2$, and/or $\rho_1 \neq \rho_2$, it may be used as a model problem to study decoupling techniques for jumping coefficients and multi-scale applications.
	The scalar model problem may also be used to investigate and understand the numerical and physical properties of decoupling techniques for being further extended to other important applications in multi-modeling, multi-physics, multi-scale computation such as FSI.

Define the function spaces

\begin{equation}\label{SpaceDefinition}
\begin{aligned}
&{\bm V}_1 = \left\{ v_1 \in {\bm H}^1 ({\Omega}_1):~ v_1=0 \mbox{~~on~~} \partial\Omega_1\backslash\Gamma \right\},\\
&{\bm V}_2 = \left\{ v_2 \in {\bm H}^1 ({\Omega}_2):~ v_2=0 \mbox{~~on~~} \partial\Omega_2\backslash\Gamma \right\},\\
&{\bm V} = \left\{ (v_1,v_2) \in {\bm V}_1 \times {\bm V}_2 :~
v_1 = v_2 \mbox{~~on~~} \Gamma \right\},\\
&{\tilde{\bm V}} = \left\{ (v_{1},v_{2}) \in {\bm V}_{1} \times {\bm V}_{2}\right\},\\
&{\bm V}^{\Gamma} = \left\{ (v_1,v_2) \in {\bm V}_1 \times {\bm V}_2:~
v_1 = v_2 =0 \mbox{~~on~~} \Gamma \right\},\\
&{\bm \Lambda}_{\Gamma} = \{\xi = v_1|_{\Gamma}:~ v_1\in {\bm V}_1\}=\{\xi = v_2|_{\Gamma}:~ v_2\in {\bm V}_2\},\\
&{\bm V}_{1}^{\Gamma} = \{v_1\in {\bm V}_1:~ v_1=0 \mbox{~~on~~} \Gamma\},\\
&{\bm V}_{2}^{\Gamma} = \{v_2\in {\bm V}_2:~ v_2=0 \mbox{~~on~~} \Gamma\}.
\end{aligned}
\end{equation}
%Test (\ref{Equation1}) with any $v_1 \in {\bm V}_1$ such that
%\begin{equation}
%\left(\rho_1\frac{\partial u_1}{\partial t},v_1\right)_{\Omega_1} + \left(\beta_1 \nabla u_1, \nabla v_1 \right)_{\Omega_1}  =
%\left(f_1,v_1 \right)_{\Omega_1} + \left(\beta_1 \nabla u_1 \cdot {\bm n}_1, v_1 \right)_\Gamma, \label{Weakform1}
%\end{equation}
%and test (\ref{Equation2}) with any $v_2 \in {\bm V}_2$ such that
%\begin{equation}
%\left(\rho_2\frac{\partial u_2}{\partial t},v_2\right)_{\Omega_2} + \left(\beta_2 \nabla u_2, \nabla v_2 \right)_{\Omega_2}  =
%\left(f_2,v_2 \right)_{\Omega_2} + \left(\beta_2 \nabla u_2 \cdot {\bm n}_2, v_2 \right)_\Gamma, \label{Weakform2}
%\end{equation}

Integration by parts gives the weak form of the coupled model (\ref{Equation1})-(\ref{BoundaryCondition2}):

{\bf Model C: The Weak Form of the Continuous Coupled Model}\\

Find $u_{}^{C}=(u_{1}^{C},u_{2}^{C})\in {\bm V}$, such that
\begin{equation}\label{ModelC}
\begin{split}
\rho_1\left(\frac{\partial u_{1}^{C}}{\partial t},v_{1}\right)_{\Omega_1} + \left(\beta_1 \nabla u_{1}^{C}, \nabla v_{1} \right)_{\Omega_1} +
\rho_2\left(\frac{\partial u_{2}^{C}}{\partial t},v_{2}\right)_{\Omega_2} + \left(\beta_2 \nabla u_{2}^{C}, \nabla v_{2} \right)_{\Omega_2} \\
=
\left(f_{1},v_{1} \right)_{\Omega_1} + \left(f_{2},v_{2} \right)_{\Omega_2}, ~~~ \forall v_{}=(v_{1},v_{2})\in {\bm V}.
\end{split}
\end{equation}

\section{Semi-Discretization and Approximate Intrinsic or Inertial Type Robin Conditions}
The weak formulation is mathematically equivalent to the strong form of the original coupled PDE model in the conventional sense, and its solution still satisfies the original Dirichlet and Neumann conditions (2.3) and (2.4) on the interface.
There could be various equivalent interface conditions to be derived for various purposes. To understand what should be the appropriate interface conditions for a numerical scheme to satisfy for the sake of stability and convergence, in particular when we want to devise effective decoupling techniques later, let us start with the original continuous model and investigate the mechanism on how the interface conditions would change from the original model due to discrete approximations.

Note that the issue of numerical stability during time marching in numerical computation is caused by the discretization in both space and time.
To understand how the spacial discretizatin affects the interface conditions, yet without the stability factor, let us concentrate on the finite element semi-discrete approximation:

{\bf Model $S_h$: The Semi-Discrete Model with Finite Element Approximation in Space}\\

Find $u_{h}^{S}=(u_{1,h}^{S},u_{2,h}^{S})\in {\bm V}_h$, such that
\begin{equation}\label{ModelS}
\begin{split}
\rho_1\left(\frac{\partial u_{1,h}^{S}}{\partial t},v_{1,h}\right)_{\Omega_1} + \left(\beta_1 \nabla u_{1,h}^{S}, \nabla v_{1,h} \right)_{\Omega_1} +
\rho_2\left(\frac{\partial u_{2,h}^{S}}{\partial t},v_{2,h}\right)_{\Omega_2} + \\
\left(\beta_2 \nabla u_{2,h}^{S}, \nabla v_{2,h} \right)_{\Omega_2}
=
\left(f_{1,h},v_{1,h} \right)_{\Omega_1} + \left(f_{2,h},v_{2,h} \right)_{\Omega_2}, ~~\forall v_{h}=(v_{1,h},v_{2,h})\in {\bm V}_h,
\end{split}
\end{equation}
where ${\bm V}_h$ is a standard finite element subspace of ${\bm V}$.
Note now that {\bf Model $S_h$} is no longer equivalent to {\bf Model C}, but just an approximation. Yet there are standard error estimates for the solutions of these two models under certain regularity assumptions.

However, it is essentially important to point out that the approximate solution $u_{h}^{S}$ of {\bf Model $S_h$} no longer satisfies the interface conditions of the original coupled model although the Dirichlet condition remains enforced in the solution space ${\bm V}_h$, to which little attention has been paid in the past, mainly because the fully implicit time discretization will then lead to an unconditionally stable scheme, and the convergence and error analysis are ready in the classical finite element analysis, so it appears unnecessary to further investigate what happens on the interface.
However, if decoupling is necessary as a further approximation, it would then become crucially important to find out what is indeed the corresponding NEW interface coupling conditions implied by this approximate semi-discrete model due to the spacial discretization, which would then provide us insight for decoupling the numerical computation properly in order to keep maintaining numerical stability and convergence for decoupled methods.

Unfortunately, it is difficult to derive the local interface conditions for such a weak formulation defined in the integration form over the entire geometric domain.
For finding out such an interface condition approximately, motivated by \cite{fernandez2015generalized}, we propose to apply the lumped-mass technique \cite{thomee2006galerkin}, which will lead to a further approximation to the semi-discrete model.

Let us now briefly review the lumped-mass technique. Let $A$ be a $n\times n$ matrix and $A_L$ be the corresponding lumped-mass matrix
\begin{equation}
\begin{aligned}
&A
=\begin{bmatrix}

a_{11}  &  a_{12}  &            &          &           &    \\
a_{21}  &  a_{22}  &  a_{23}    &          &           &    \\
&  \ddots  & \ddots	    &  \ddots  & 	       &    \\	
&    	   &  a_{ii-1}  &  a_{ii}  &  a_{ii+1} &   \\
&    	   &            &  \ddots  &  \ddots   &\ddots   \\
&    	   &            &          &  \ddots   & \ddots    & \ddots      \\
&     	   &            &          &           &a_{nn-1}   & a_{nn} \\
\end{bmatrix}
\xrightarrow{\text{Lumped-mass}}\\
&A_L
=\begin{bmatrix}

a_{1}  &  0      &            &          &           &    \\
0      &  a_{2}  &  0    &          &           &    \\
&  \ddots  & \ddots	    &  \ddots  & 	       &    \\	
&    	   &  0  &  a_{i}  &  0 &   \\
&    	   &            &  \ddots  &  \ddots   &\ddots   \\
&    	   &            &          &  \ddots   & \ddots    & \ddots      \\
&     	   &            &          &           &0  & a_{n} \\
\end{bmatrix},
\end{aligned}
\end{equation}
where
\begin{equation}
a_i=\sum_{j=1}^{n} a_{ij}.
\end{equation}

Denote $W_h\subset H^{1}$ consists of continuous, piecewise linear functions on a quasiuniform family of triangulations $\mathcal{T}_h$ of any domain $\Omega$, where $\Omega=\Omega_1$ or $\Omega=\Omega_2$. The inner product in $W_h$ is defined by
$$\left(u_h,v_h\right)_{\Omega}=\int_{\Omega}u_h v_h, ~~~~\text{for any}~~ u_h,v_h\in W_h.$$

Let $\tau$ be a triangle of the triangulation $\mathcal{T}_h$ and $P_{\tau,j}$ be its vertices. Define the quadrature formula for any function $f$
\begin{equation}
Q_{\tau}(f) = \frac{1}{3}\text{area}{(\tau)}\sum_{j=1}^{3} f(P_{\tau,j}),
\end{equation}
then define the lumped-mass approximation of inner product $\left(u_h,v_h\right)_{\Omega}$ by
\begin{equation}\label{LMInnerProduct}
\left(u_h,v_h\right)_{\Omega,L} =  \sum_{\tau\in \mathcal{T}_h} Q_{\tau}\left(u_h v_h\right).
\end{equation}
Let $\{v_{i}\}_{i\leq n}\in W_h$ be the finite element basis, the following two properties hold \cite{thomee2006galerkin}:
\begin{itemize}
	\item $\left(v_i,v_j\right)_{\Omega,L}=0$ for $i\neq j$,
	\item $\left(v_i,v_i\right)_{\Omega,L}=\sum_{j=1}^{n}\left(v_i,v_j\right)_{\Omega}$.
\end{itemize}
From the properties, if $A$ is generated by $\left(v_i,v_j\right)_{\Omega}$, then $A_L$ is generated by $\left(v_i,v_j\right)_{\Omega,L}$.
%\subsection{Intrinsic Robin Conditions}

Denote ${\bm V}_{1,h},{\bm V}_{1,h},{\bm \Lambda}_{\Gamma,h},{\bm V}_{1,h}^{\Gamma},{\bm V}_{2,h}^{\Gamma}$ be the corresponding discrete spaces with continuous piecewise linear functions. Applying the lumped-mass technique to ${\bm V}_{1,h},{\bm V}_{1,h}$, we may build connections among the inner products in ${\bm \Lambda}_{\Gamma,h}$ and ${\bm V}_{1,h},~{\bm V}_{2,h}$.
Define two discrete lifting operators
\begin{equation}\label{LiftOperators}
\begin{aligned}
\mathcal{L}_{1,h}:~{\bm \Lambda}_{\Gamma,h}\longrightarrow {\bm V}_{1,h},\\
\mathcal{L}_{2,h}:~{\bm \Lambda}_{\Gamma,h}\longrightarrow {\bm V}_{2,h},
\end{aligned}
\end{equation}
such that the nodal values of $\mathcal{L}_{1,h}\xi_h$ and $\mathcal{L}_{2,h}\xi_h$ vanish out of $\Gamma$ for all $\xi_h\in {\bm \Lambda}_{\Gamma,h}$. The interface operator ${\bm B}_{1,h}:~ {\bm \Lambda}_{\Gamma,h} \longrightarrow {\bm \Lambda}_{\Gamma,h}$ is defined by ${\bm B}_{1,h}=\mathcal{L}_{1,h}^{\ast}\mathcal{L}_{1,h}$, where $\mathcal{L}_{1,h}^{\ast}$ is the adjoint operator of $\mathcal{L}_{1,h}$ w.r.t the lumped-mass inner product in $V_{1,h}$. ${\bm B}_{2,h}$ is also defined similarly. Based on the discrete operators introduced above, we have
\begin{equation}\label{OperatorRelation1}
\begin{aligned}
&\left({\bm B}_{1,h} \xi_h, \lambda_h \right)_{\Gamma} =
\left(\mathcal{L}_{1,h}\xi_h, \mathcal{L}_{1,h}\lambda_h \right)_{\Omega_1,L}, ~~~\forall \xi_h,~\lambda_h\in {\bm \Lambda}_{\Gamma,h}\\
&\left({\bm B}_{2,h} \xi_h, \lambda_h \right)_{\Gamma} =
\left(\mathcal{L}_{2,h}\xi_h, \mathcal{L}_{2,h}\lambda_h \right)_{\Omega_2,L}, ~~~\forall \xi_h,~\lambda_h\in {\bm \Lambda}_{\Gamma,h}.
\end{aligned}
\end{equation}

The following Lemmas are useful in the analysis below. For simplicity, we denote $\mathcal{L}_{1}(v_{1}|_{\Gamma}),~\mathcal{L}_{2}(v_{2}|_{\Gamma})$ by $\mathcal{L}_{1}(v_{1}),~\mathcal{L}_{2}(v_{2})$.

\begin{lemma}\label{Lemma1}
	For all $\left( v_{1,h},\xi \right)\in {\bm V}_{1,h} \times {\bm \Lambda}_{\Gamma,h}$, we have
	\begin{equation}\label{OperatorRelation2_1}
	\left( v_{1,h}, \mathcal{L}_{1,h} \xi_h \right)_{\Omega_1,L} =
	\left({\bm B}_{1,h} v_{1,h}, \xi_h \right)_{\Gamma}.
	\end{equation}
\end{lemma}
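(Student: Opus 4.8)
The plan is to exploit the diagonal, purely nodal structure of the lumped-mass inner product, which is exactly what renders the interior degrees of freedom of $v_{1,h}$ invisible to a pairing against a lifted interface function. First I would fix the notation: since ${\bm B}_{1,h}$ acts on ${\bm \Lambda}_{\Gamma,h}$, the symbol ${\bm B}_{1,h}v_{1,h}$ must be read as ${\bm B}_{1,h}(v_{1,h}|_\Gamma)$. With this reading I would begin from the right-hand side and invoke \eqref{OperatorRelation1} with $\xi_h$ replaced by $v_{1,h}|_\Gamma$ and $\lambda_h$ replaced by $\xi_h$, which gives
\[
\left({\bm B}_{1,h}(v_{1,h}|_\Gamma),\xi_h\right)_\Gamma
= \left(\mathcal{L}_{1,h}(v_{1,h}|_\Gamma),\mathcal{L}_{1,h}\xi_h\right)_{\Omega_1,L}.
\]
The lemma therefore reduces to the claim that the genuine left-hand side $\left(v_{1,h},\mathcal{L}_{1,h}\xi_h\right)_{\Omega_1,L}$ coincides with $\left(\mathcal{L}_{1,h}(v_{1,h}|_\Gamma),\mathcal{L}_{1,h}\xi_h\right)_{\Omega_1,L}$; that is, replacing $v_{1,h}$ by the lift of its own trace leaves the lumped-mass pairing with $\mathcal{L}_{1,h}\xi_h$ unchanged.

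Next I would recast this as the orthogonality identity
\[
\left(v_{1,h}-\mathcal{L}_{1,h}(v_{1,h}|_\Gamma),\,\mathcal{L}_{1,h}\xi_h\right)_{\Omega_1,L}=0 .
\]
The function $w_{1,h}:=v_{1,h}-\mathcal{L}_{1,h}(v_{1,h}|_\Gamma)$ has vanishing nodal values on $\Gamma$, because the lift reproduces exactly the interface nodal values of $v_{1,h}$; hence $w_{1,h}\in{\bm V}_{1,h}^\Gamma$. On the other hand, by construction $\mathcal{L}_{1,h}\xi_h$ carries nonzero nodal values only at the interface nodes. Expanding both factors in the nodal basis $\{v_i\}$ and using the first lumped-mass property, $\left(v_i,v_j\right)_{\Omega_1,L}=0$ for $i\neq j$, the double sum collapses to a sum over nodes common to the two nodal supports, which is empty. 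This yields the desired vanishing and closes the argument.

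I do not anticipate a genuine obstacle; the computation is essentially bookkeeping in the nodal basis. The single point requiring care is that the decoupling of interior and interface nodes is a feature of the \emph{lumped} inner product and not of the consistent one: with the exact mass matrix the cross terms $\left(v_i,v_j\right)_{\Omega_1}$ between an interface node $i$ and an adjacent interior node $j$ would not vanish and the identity would fail. Keeping the two lumped-mass properties explicit throughout — off-diagonal vanishing together with the row-sum on the diagonal — is the only place where one must be attentive, and it is precisely the mechanism on which the subsequent intrinsic or inertial Robin derivation will rely.
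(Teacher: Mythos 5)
Your proposal is correct and follows essentially the same route as the paper: both decompose $v_{1,h}$ into its interface lift $\mathcal{L}_{1,h}(v_{1,h}|_\Gamma)$ plus a remainder in ${\bm V}_{1,h}^{\Gamma}$, kill the remainder's lumped-mass pairing with $\mathcal{L}_{1,h}\xi_h$ by the purely nodal character of the lumped inner product, and convert the surviving term via \eqref{OperatorRelation1}. Your nodal-basis justification of the orthogonality step is just a more explicit rendering of the paper's observation that all nodal values of the product $\tilde{v}_{1,h}\cdot\mathcal{L}_{1,h}\xi_h$ vanish.
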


\begin{proof}
	For $v_{1,h} \in {\bm V}_{1,h}$, it can be decomposed into $v_{1,h}=\tilde{v}_{1,h}+\mathcal{L}_{1,h} v_{1,h}$ with $\tilde{v}_{1,h} \in {\bm V}_{1,h}^{\Gamma}$. From the definition of lumped-mass inner product (\ref{LMInnerProduct}), we can conclude that
	$\left(\tilde{v}_{1,h}, \mathcal{L}_{1,h}\xi  \right)_{\Omega_1,L}=0$ for all $\xi_h \in {\bm \Lambda}_{\Gamma,h}$ since all the nodal values of $\tilde{v}_{1,h}\cdot\mathcal{L}_{1,h}\xi_h$ are $0$. Therefore, using (\ref{OperatorRelation1}), we have
	$$
	\left( v_{1,h}, \mathcal{L}_{1,h} \xi_h \right)_{\Omega_1,L}=
	\left( \tilde{v}_{1,h}, \mathcal{L}_{1,h} \xi_h \right)_{\Omega_1,L}+
	\left( \mathcal{L}_{1,h} v_{1,h}, \mathcal{L}_{1} \xi_h \right)_{\Omega_1,L}=
	\left( {\bm B}_{1,h} v_{1,h}, \xi_h \right)_{\Gamma}.
	$$
\end{proof}

Similarly, we also have
\begin{lemma}\label{Lemma2}
	For all $\left( v_{2,h},\xi_h \right)\in {\bm V}_{2,h} \times {\bm \Lambda}_{\Gamma,h}$, we have
	\begin{equation}\label{OperatorRelation2_2}
	\left( v_{2,h}, \mathcal{L}_{2,h} \xi_h \right)_{\Omega_2,L} =
	\left({\bm B}_{2,h} v_{2,h}, \xi_h \right)_{\Gamma}.
	\end{equation}
\end{lemma}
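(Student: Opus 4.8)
The plan is to reproduce the argument of Lemma~\ref{Lemma1} essentially line for line, since the subdomains $\Omega_1$ and $\Omega_2$ enter the construction symmetrically and the operators $\mathcal{L}_{2,h}$, ${\bm B}_{2,h}$ are defined exactly as $\mathcal{L}_{1,h}$, ${\bm B}_{1,h}$ but with the index $2$ and the lumped-mass inner product on $\Omega_2$. Thus the only task is to transport each step of the first proof to the second subdomain and to verify that nothing in that argument used a property peculiar to $\Omega_1$.

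First I would decompose an arbitrary $v_{2,h}\in{\bm V}_{2,h}$ as $v_{2,h}=\tilde v_{2,h}+\mathcal{L}_{2,h}v_{2,h}$, where $\tilde v_{2,h}:=v_{2,h}-\mathcal{L}_{2,h}(v_{2,h}|_{\Gamma})$. By the defining property of the lift in (\ref{LiftOperators}), the nodal values of $\mathcal{L}_{2,h}(v_{2,h}|_{\Gamma})$ agree with those of $v_{2,h}$ on $\Gamma$ and vanish at all nodes off $\Gamma$; hence $\tilde v_{2,h}$ vanishes at every nodal point on $\Gamma$, so $\tilde v_{2,h}\in{\bm V}_{2,h}^{\Gamma}$.

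Next I would use that the lumped-mass inner product (\ref{LMInnerProduct}) samples only vertex values, so that $(\tilde v_{2,h},\mathcal{L}_{2,h}\xi_h)_{\Omega_2,L}$ equals a weighted sum, over the nodes, of the products of the corresponding nodal values. At each node either $\tilde v_{2,h}$ vanishes (nodes on $\Gamma$) or $\mathcal{L}_{2,h}\xi_h$ vanishes (nodes off $\Gamma$), so every term is zero and this cross inner product vanishes. Combining this orthogonality with bilinearity and the second relation of (\ref{OperatorRelation1}) then yields
$$\left(v_{2,h},\mathcal{L}_{2,h}\xi_h\right)_{\Omega_2,L}=\left(\mathcal{L}_{2,h}v_{2,h},\mathcal{L}_{2,h}\xi_h\right)_{\Omega_2,L}=\left({\bm B}_{2,h}v_{2,h},\xi_h\right)_{\Gamma},$$
which is precisely (\ref{OperatorRelation2_2}).

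I expect no real difficulty here, as the statement is the mirror image of Lemma~\ref{Lemma1}. The single point deserving attention is the nodal-orthogonality step: it is valid only because the lumped-mass inner product is diagonal in the nodal basis, so that the disjoint nodal supports of $\tilde v_{2,h}$ and $\mathcal{L}_{2,h}\xi_h$ force the product to vanish at every vertex. With the exact $L^2$ inner product the interior and interface functions would overlap on the layer of triangles touching $\Gamma$ and the corresponding cross term would not vanish, which is exactly why the lumped-mass approximation is invoked at this stage.
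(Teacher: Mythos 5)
Your proof is correct and is essentially the argument the paper gives for Lemma~\ref{Lemma1} (the paper simply states Lemma~\ref{Lemma2} follows ``similarly''): the same decomposition $v_{2,h}=\tilde v_{2,h}+\mathcal{L}_{2,h}v_{2,h}$ with $\tilde v_{2,h}\in{\bm V}_{2,h}^{\Gamma}$, the vanishing of the cross term under the lumped-mass inner product, and the identity (\ref{OperatorRelation1}). Your added remark on why the nodal-orthogonality step relies on the diagonal (lumped) inner product is a useful clarification but does not change the route.
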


We may now define the lumped-mass semi-discrete model as a further approximation, denoted by {\bf Model $L_h$}.

{\bf Model $L_h$: The Lumped-Mass Semi-Discrete Model}\\

Find $u_{h}^{L}=(u_{1,h}^{L},u_{2,h}^{L})\in {\bm V}_h$, such that
\begin{equation}\label{ModelL}
\begin{split}
\rho_1\left(\frac{\partial u_{1,h}^{L}}{\partial t},v_{1,h}\right)_{\Omega_1,L} + \left(\beta_1 \nabla u_{1,h}^{L}, \nabla v_{1,h} \right)_{\Omega_1} +
\rho_2\left(\frac{\partial u_{2,h}^{L}}{\partial t},v_{2,h}\right)_{\Omega_2,L} + \\
\left(\beta_2 \nabla u_{2,h}^{L}, \nabla v_{2,h} \right)_{\Omega_2}
=
\left(f_{1,h},v_{1,h} \right)_{\Omega_1} + \left(f_{2,h},v_{2,h} \right)_{\Omega_2}, ~~\forall v_{h}=(v_{1,h},v_{2,h})\in {\bm V}_h.
\end{split}
\end{equation}

Under certain regularity assumptions, there are also standard error estimates for the solutions of the two semi-discrete models, as well as the original continuous model \cite{thomee2006galerkin}. In particular, if $u^C = \left( u_1^C, u_2^C \right)$ has $H^2$ regularity, for each subdomain we have

\begin{equation}\label{StandardErrorEstimates}
\begin{aligned}
&\|u_{i,h}^{L}-u_{i}^{C}\|_{L^{2}(\Omega_i)}=O(h^2),~~ i = 1, 2,\\
&\|u_{i,h}^{L}-u_{i}^{C}\|_{H^{1}(\Omega_i)}=O(h), ~~ i = 1, 2.\\
\end{aligned}
\end{equation}

There are many important applications in numerical computation where the Dirichlet condition enforced in the solution space ${\bm V}_h$ should be relaxed at certain steps for decoupling purposes. Such cases arise from matching the Dirichlet data on the interface by the classical non-overlapping domain decomposition approach for constructing parallel time marching schemes for parabolic problems or for constructing parallel iterative methods for elliptic problems by simulating the time evolution to the equilibrium state, to today's decoupling for multi-physics computation. For understanding the interface mechanism in order to construct effective decoupled numerical methods as well as to conduct the corresponding numerical analysis in the future, it will be fundamentally important to examine the following modified problem to the above {\bf Model $L_h$} by removing the Dirichlet interface condition from the solution space ${\bm V}_h$, where the modification of ${\bm V}_h$ is defined as ${\tilde{\bm V}}_h = \left\{ (v_{1,h},v_{2,h}) \in {\bm V}_{1,h} \times {\bm V}_{2,h}\right\}$.

{\bf Problem $P_h$: A Perturbation to Model $L_h$}\\

Consider a function $u_{h}^{\tilde{L}}=(u_{1,h}^{\tilde{L}},u_{2,h}^{\tilde{L}})\in {\tilde{\bm V}}_h$ satisfying
\begin{equation}\label{ModelP}
\begin{split}
\rho_1\left(\frac{\partial u_{1,h}^{\tilde{L}}}{\partial t},v_{1,h}\right)_{\Omega_1,L} + \left(\beta_1 \nabla u_{1,h}^{\tilde{L}}, \nabla v_{1,h} \right)_{\Omega_1} +
\rho_2\left(\frac{\partial u_{2,h}^{\tilde{L}}}{\partial t},v_{2,h}\right)_{\Omega_2,L} + \\
\left(\beta_2 \nabla u_{2,h}^{\tilde{L}}, \nabla v_{2,h} \right)_{\Omega_2}
=
\left(f_{1,h},v_{1,h} \right)_{\Omega_1} + \left(f_{2,h},v_{2,h} \right)_{\Omega_2}, ~ \forall v_{h}=(v_{1,h},v_{2,h})\in {{\bm V}}_h.
\end{split}
\end{equation}

The following theorem is revealing.

\begin{theorem}\label{theorem1}
If $u_{h}^{\tilde{L}} \in {\tilde{\bm V}}_h$ satisfies (\ref{ModelP}) in {\bf Problem $P_h$}, then $u_{h}^{\tilde{L}}$ satisfies the following condition:
\begin{equation}\label{WeakIntrinsicRobinConditionPerturbation}
\begin{split}
\rho_1\left({\bm B}_{1,h}\frac{\partial u_{2,h}^{\tilde{L}}}{\partial t},\xi_h  \right)_{\Gamma} +
\\
\left[\rho_2{\left(\frac{\partial u_{2,h}^{\tilde{L}}}{\partial t},\mathcal{L}_{2,h}\xi_h\right)_{\Omega_2,L}} +
{\left(\beta_2 \nabla u_{2,h}^{\tilde{L}}, \nabla(\mathcal{L}_{2,h}\xi_h) \right)_{\Omega_2}} -
{\left(f_{2,h},\mathcal{L}_{2,h}\xi_h \right)_{\Omega_2}} \right]
=\\
\rho_1\left({\bm B}_{1,h}\frac{\partial u_{1,h}^{\tilde{L}}}{\partial t},\xi_h  \right)_{\Gamma} -\\
\left[\rho_1\left(\frac{\partial u_{1,h}^{\tilde{L}}}{\partial t}, \mathcal{L}_{1,h}\xi_h  \right)_{\Omega_1,L}+
\left(\beta_1 \nabla u_{1,h}^{\tilde{L}}, \nabla(\mathcal{L}_{1,h}\xi_h) \right)_{\Omega_1} -
\left(f_{1,h},\mathcal{L}_{1,h}\xi_h \right)_{\Omega_1}\right] +\\
\rho_1\left({\bm B}_{1,h}\frac{\partial (u_{1,h}^{\tilde{L}}-u_{2,h}^{\tilde{L}})}{\partial t},\xi_h  \right)_{\Gamma}, ~~~ \forall \xi_h\in {\bm \Lambda}_{\Gamma,h}.
\end{split}
\end{equation}
\end{theorem}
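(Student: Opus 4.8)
The plan is to read off a local interface relation from the global identity \eqref{ModelP} by testing against functions supported near $\Gamma$; this is the discrete surrogate for extracting the Neumann (flux) data on the interface. Concretely, for a fixed $\xi_h\in{\bm\Lambda}_{\Gamma,h}$ I would take the test pair $v_h=(\mathcal{L}_{1,h}\xi_h,\mathcal{L}_{2,h}\xi_h)$. The first thing to verify is admissibility, i.e.\ $v_h\in{\bm V}_h$: by construction the two lifts carry the common nodal trace $\xi_h$ on $\Gamma$ and vanish at all nodes off $\Gamma$, so $\mathcal{L}_{1,h}\xi_h=\xi_h=\mathcal{L}_{2,h}\xi_h$ on $\Gamma$ while both components vanish on the Dirichlet parts of the boundary. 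Hence the continuity constraint defining ${\bm V}_h$ is met and $v_h$ is a legitimate test function, even though $u_h^{\tilde{L}}$ itself is only sought in the larger space ${\tilde{\bm V}}_h$.

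Next I would substitute this $v_h$ into \eqref{ModelP}. Abbreviating the domain-$i$ local residual paired with the lift by
\[
\mathcal{R}_i(\xi_h) := \rho_i\Big(\tfrac{\partial u_{i,h}^{\tilde{L}}}{\partial t},\,\mathcal{L}_{i,h}\xi_h\Big)_{\Omega_i,L} + \big(\beta_i\nabla u_{i,h}^{\tilde{L}},\,\nabla(\mathcal{L}_{i,h}\xi_h)\big)_{\Omega_i} - \big(f_{i,h},\,\mathcal{L}_{i,h}\xi_h\big)_{\Omega_i},\quad i=1,2,
\]
the substitution collapses \eqref{ModelP} to the single scalar identity $\mathcal{R}_1(\xi_h)+\mathcal{R}_2(\xi_h)=0$ for every $\xi_h\in{\bm\Lambda}_{\Gamma,h}$. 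This is precisely the discrete flux balance on $\Gamma$, the semi-discrete counterpart of the Neumann interface condition \eqref{InterfaceCondition2} with $q=0$, and it is the only consequence of \eqref{ModelP} that survives once we localize to the interface.

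The remaining work is to recast $\mathcal{R}_1+\mathcal{R}_2=0$ into the inertial-Robin shape of \eqref{WeakIntrinsicRobinConditionPerturbation}. I would add the interface inertia term $\rho_1({\bm B}_{1,h}\partial_t u_{2,h}^{\tilde{L}},\xi_h)_{\Gamma}$ to both sides of $\mathcal{R}_2=-\mathcal{R}_1$. Inside $\mathcal{R}_1$, Lemma~\ref{Lemma1} applied with $v_{1,h}=\partial_t u_{1,h}^{\tilde{L}}\in{\bm V}_{1,h}$ identifies the lumped-mass volume inertia $\rho_1(\partial_t u_{1,h}^{\tilde{L}},\mathcal{L}_{1,h}\xi_h)_{\Omega_1,L}$ with its interface form $\rho_1({\bm B}_{1,h}\partial_t u_{1,h}^{\tilde{L}},\xi_h)_{\Gamma}$; this equivalence is exactly what gives the condition its intrinsic/inertial meaning (the near-interface volume inertia is an interface operator). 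Then the linearity of ${\bm B}_{1,h}$ on traces lets me write $\rho_1({\bm B}_{1,h}\partial_t u_{2,h}^{\tilde{L}},\xi_h)_{\Gamma}=\rho_1({\bm B}_{1,h}\partial_t u_{1,h}^{\tilde{L}},\xi_h)_{\Gamma}-\rho_1({\bm B}_{1,h}\partial_t(u_{1,h}^{\tilde{L}}-u_{2,h}^{\tilde{L}}),\xi_h)_{\Gamma}$, and collecting the pieces reproduces \eqref{WeakIntrinsicRobinConditionPerturbation}, the final jump-driven term appearing as the perturbation that measures the violation of the Dirichlet continuity $u_1=u_2$ on $\Gamma$ dropped in passing from ${\bm V}_h$ to ${\tilde{\bm V}}_h$.

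The main obstacle I anticipate is bookkeeping rather than analysis. Admissibility of the lifted test function at the corner nodes where $\Gamma$ meets $\partial\Omega_1$ and $\partial\Omega_2$ must be handled consistently with the Dirichlet data, and the two volume inertia terms must be routed through the correct operators ($\mathcal{L}_{1,h},{\bm B}_{1,h}$ via Lemma~\ref{Lemma1} versus $\mathcal{L}_{2,h},{\bm B}_{2,h}$ via Lemma~\ref{Lemma2}), which in general differ because the subdomain meshes, densities and lumped-mass matrices are distinct. The genuinely delicate point is the orientation and sign of the jump term: the difference $\partial_t(u_{1,h}^{\tilde{L}}-u_{2,h}^{\tilde{L}})$ and the choice of which interface inertia is added must be kept consistent throughout, so that when continuity $u_{1,h}^{\tilde{L}}=u_{2,h}^{\tilde{L}}$ on $\Gamma$ is restored the perturbation vanishes and \eqref{WeakIntrinsicRobinConditionPerturbation} collapses exactly to the intrinsic/inertial Robin condition for \textbf{Model $L_h$}.
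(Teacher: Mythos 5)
Your proposal is correct and follows essentially the same route as the paper's proof: test (\ref{ModelP}) with the admissible lifted pair $(\mathcal{L}_{1,h}\xi_h,\mathcal{L}_{2,h}\xi_h)\in{\bm V}_h$, condense the $\Omega_1$ inertia to $\Gamma$ via Lemma~\ref{Lemma1}, and trade ${\bm B}_{1,h}\partial_t u_{1,h}^{\tilde{L}}$ for ${\bm B}_{1,h}\partial_t u_{2,h}^{\tilde{L}}$ plus a trace-jump correction, which is exactly the paper's substitution $u_{1,h}^{\tilde{L}}=u_{2,h}^{\tilde{L}}-e_D$ combined with its add-and-subtract of $\rho_1\bigl(\partial_t u_{1,h}^{\tilde{L}},\mathcal{L}_{1,h}\xi_h\bigr)_{\Omega_1,L}$. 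One remark: your algebra (like the paper's own proof, whose last display carries $+\rho_1\bigl({\bm B}_{1,h}\partial_t e_D,\xi_h\bigr)_{\Gamma}$ with $e_D=u_{2,h}^{\tilde{L}}-u_{1,h}^{\tilde{L}}$) actually produces the final jump term as $\rho_1\bigl({\bm B}_{1,h}\partial_t(u_{2,h}^{\tilde{L}}-u_{1,h}^{\tilde{L}}),\xi_h\bigr)_{\Gamma}$, the opposite sign to the one displayed in (\ref{WeakIntrinsicRobinConditionPerturbation}); since the two bracketed residuals sum to zero, the displayed sign would force $\bigl({\bm B}_{1,h}\partial_t(u_{1,h}^{\tilde{L}}-u_{2,h}^{\tilde{L}}),\xi_h\bigr)_{\Gamma}=0$ identically, so this is a sign typo in the stated theorem rather than a flaw in your argument.
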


\begin{proof}
Given $\xi_h\in {\bm \Lambda}_{\Gamma,h}$, taking $v_{1,h}=\mathcal{L}_{1,h}\xi_h$ and $v_{2,h}=\mathcal{L}_{2,h}\xi_h$ in (\ref{ModelP}), we have
\begin{equation}\label{ModelB_InterfacePart}
\begin{split}
&\left[\rho_1\left(\frac{\partial u_{1,h}^{\tilde{L}}}{\partial t},\mathcal{L}_{1,h}\xi_h\right)_{\Omega_1,L} +
\left(\beta_1 \nabla u_{1,h}^{\tilde{L}}, \nabla(\mathcal{L}_{1,h}\xi_h) \right)_{\Omega_1} -
\left(f_{1,h},\mathcal{L}_{1,h}\xi_h \right)_{\Omega_1}\right]
+\\
&\left[\rho_2{\left(\frac{\partial u_{2,h}^{\tilde{L}}}{\partial t},\mathcal{L}_{2,h}\xi_h\right)_{\Omega_2,L}} +
{\left(\beta_2 \nabla u_{2,h}^{\tilde{L}}, \nabla(\mathcal{L}_{2,h}\xi_h) \right)_{\Omega_2}}  -
{\left(f_{2,h},\mathcal{L}_{2,h}\xi_h \right)_{\Omega_2}}\right]
= 0.
\end{split}
\end{equation}
The lumped-mass property (\ref{OperatorRelation2_1}) in Lemma 3.1 allows us to condense the inner product in subdomain $\Omega_1$ to the interface $\Gamma$:
\begin{equation}\label{Temp8}
\rho_1\left(\frac{\partial u_{1,h}^{\tilde{L}}}{\partial t},\mathcal{L}_{{1,h}}\xi_h  \right)_{\Omega_1,L} =
\rho_1\left({\bm B}_{1,h}\frac{\partial u_{1,h}^{\tilde{L}}}{\partial t},\xi_h  \right)_{\Gamma}.
\end{equation}
%{which then enables us to bring in the effect of the Dirichlet interface condition $ u_{1,h}^{\tilde{L}} |_{\Gamma}= u_{2,h}^{\tilde{L}}|_{\Gamma}$ to (\ref{Temp8}) enforced in the solution space ${\bm V}_h$ and passes the information to subdomain $\Omega_2$}:
In order to pass the Dirichlet information from $\Omega_1$ to  $\Omega_2$, denote $u_{1,h}^{\tilde{L}} = u_{2,h}^{\tilde{L}} - e_D$,
we have
\begin{equation}\label{Temp9}
\rho_1\left(\frac{\partial u_{1,h}^{\tilde{L}}}{\partial t},\mathcal{L}_{{1,h}}\xi_h  \right)_{\Omega_1,L} =
\rho_1\left({\bm B}_{1,h}\frac{\partial u_{2,h}^{\tilde{L}}}{\partial t},\xi_h  \right)_{\Gamma} -
\rho_1\left({\bm B}_{1,h}\frac{\partial e_D}{\partial t},\xi_h  \right)_{\Gamma}.
\end{equation}
%{ Seperate and commen ts on physics about velocity and force, add density term\\}
Applying (\ref{Temp9}) to (\ref{ModelB_InterfacePart}) gives
\begin{equation}
\begin{split}
&\left[\rho_1\left({\bm B}_{1,h}\frac{\partial u_{2,h}^{\tilde{L}}}{\partial t},\xi_h  \right)_{\Gamma} +
\left(\beta_1 \nabla u_{1,h}^{\tilde{L}}, \nabla(\mathcal{L}_{1,h}\xi_h) \right)_{\Omega_1} -
\left(f_{1,h},\mathcal{L}_{1,h}\xi_h \right)_{\Omega_1}\right]
+\\
&\left[\rho_2{\left(\frac{\partial u_{2,h}^{\tilde{L}}}{\partial t},\mathcal{L}_{2,h}\xi_h\right)_{\Omega_2,L}} +
{\left(\beta_2 \nabla u_{2,h}^{\tilde{L}}, \nabla(\mathcal{L}_{2,h}\xi_h) \right)_{\Omega_2}}  -
{\left(f_{2,h},\mathcal{L}_{2,h}\xi_h \right)_{\Omega_2}}\right]
=\\
&\rho_1\left({\bm B}_{2,h}\frac{\partial e_D}{\partial t},\xi_h  \right)_{\Gamma}.
\end{split}
\end{equation}
Reorganizing the terms by separating the information on each subdomain yields
\begin{equation}\label{Temp6}
\begin{split}
\rho_1\left({\bm B}_{1,h}\frac{\partial u_{2,h}^{\tilde{L}}}{\partial t},\xi_h  \right)_{\Gamma} +\\
\left[\rho_2{\left(\frac{\partial u_{2,h}^{\tilde{L}}}{\partial t},\mathcal{L}_{2,h}\xi_h\right)_{\Omega_2,L}} +
{\left(\beta_2 \nabla u_{2,h}^{\tilde{L}}, \nabla(\mathcal{L}_{2,h}\xi_h) \right)_{\Omega_2}} -
{\left(f_{2,h},\mathcal{L}_{2,h}\xi_h \right)_{\Omega_2}} \right]
=\\ -
\left[ \left(\beta_1 \nabla u_{1,h}^{\tilde{L}}, \nabla(\mathcal{L}_{1,h}\xi_h) \right)_{\Omega_1} -
\left(f_{1,h},\mathcal{L}_{1,h}\xi_h \right)_{\Omega_1}\right]
+ \rho_1\left({\bm B}_{1,h}\frac{\partial e_D}{\partial t},\xi_h  \right)_{\Gamma}.
\end{split}
\end{equation}

Note that the left hand side in (\ref{Temp6}) consists of two part: the expression in the bracket corresponds to the terms in (\ref{ModelP}) from {\bf Problem $P_h$}  for subdomain {$\Omega_2$}, and the time derivative term being condensed to the interface from subdomain {$\Omega_1$}, but then exchanged the Dirichlet interface data with subdomain {$\Omega_2$} yet leaving an error term in the right hand side, which will play the key role in the intrinsic Robin condition to be derived.

For symmetry and later on to compare with the semi-discrete model, let us
add and subtract the same term $\rho_1\left(\frac{\partial u_{1,h}^{\tilde{L}}}{\partial t}, \mathcal{L}_{1,h}\xi_h  \right)_{\Omega_1,L}$ to equation (\ref{Temp6}), which gives
\begin{equation}
\begin{split}
\rho_1\left({\bm B}_{1,h}\frac{\partial u_{2,h}^{\tilde{L}}}{\partial t},\xi_h  \right)_{\Gamma} +\\
\left[\rho_2{\left(\frac{\partial u_{2,h}^{\tilde{L}}}{\partial t},\mathcal{L}_{2,h}\xi_h\right)_{\Omega_2,L}} +
{\left(\beta_2 \nabla u_{2,h}^{\tilde{L}}, \nabla(\mathcal{L}_{2,h}\xi_h) \right)_{\Omega_2}} -
{\left(f_{2,h},\mathcal{L}_{2,h}\xi_h \right)_{\Omega_2}} \right]
=\\
\rho_1\left(\frac{\partial u_{1,h}^{\tilde{L}}}{\partial t}, \mathcal{L}_{1,h}\xi_h  \right)_{\Omega_1,L} -\\
\left[\rho_1\left(\frac{\partial u_{1,h}^{\tilde{L}}}{\partial t}, \mathcal{L}_{1,h}\xi_h  \right)_{\Omega_1,L}+
\left(\beta_1 \nabla u_{1,h}^{\tilde{L}}, \nabla(\mathcal{L}_{1,h}\xi_h) \right)_{\Omega_1} -
\left(f_{1,h},\mathcal{L}_{1,h}\xi_h \right)_{\Omega_1}\right]\\
+ \rho_1\left({\bm B}_{2,h}\frac{\partial e_D}{\partial t},\xi_h  \right)_{\Gamma}.
\end{split}
\end{equation}

Using (\ref{Temp8}) again, the added inner product term outside of the bracket in the right hand side of the above equation may also be condensed to the interface, which yields
(\ref{WeakIntrinsicRobinConditionPerturbation}) and thus completes the proof.
\end{proof}

The fundamental significance of $Theorem$ \ref{theorem1} as well as its derivation is in that the error equation (\ref{WeakIntrinsicRobinConditionPerturbation}) reveals the interface mechanism of the semi-discrete model, in particular the impact of the error in the Dirichlet data on the interface conditions when the Dirichlet data is transmitted across the interface, which results instead in the error of the time derivative of the Dirichlet data in the form of $\rho_1 {\bm B}_{1,h} \frac{\partial e_D}{\partial t}$ on the interface. Mathematically, we note that the effect of the Dirichlet interface error $e_D$ at a given time $t$ here leads to an interface error of $\frac{\partial e_D}{\partial t}$ in the time evolution model, which implies how the Dirichlet type of information should be properly related on different time levels when time discretization is applied, in particular when decoupling is necessary along the interface. It explains why in most of the classical explicit type of decoupled schemes, the explicit use of Dirichlet data from previous time levels, such as the Dirichlet-Dirichlet, Dirichlet-Neumann type of decoupling methods, would usually lead to stability issues.
Most importantly, we note that when this scalar model problem is extended to other applications such as FSI, the Dirichlet variable will physically correspond to velocity,
while the related time derivative interface terms $\rho_1 {\bm B}_{1, h} \frac{\partial }{\partial t}$ in (\ref{WeakIntrinsicRobinConditionPerturbation}) will correspond to certain interface inertial force quantities due to spacial discretization, which will make the new interface condition to be derived physically meaningful in contrast to the classical Robin interface condition.

Notice that $e_D$ and thus $\frac{\partial e_D}{\partial t}$ become zero in the error equation (\ref{WeakIntrinsicRobinConditionPerturbation}) if the Dirichlet condition is enforced on the interface.
Therefore, as a special case of (\ref{WeakIntrinsicRobinConditionPerturbation}),
for (\ref{ModelL}) in {\bf Model $L_h$} we have the following interface relationship.

\begin{theorem}\label{lemma1}
If $u_{h}^{{L}}$ is a solution of {\bf Model $L_h$}, then $u_{h}^{{L}}$ satisfies the following interface condition:
	\begin{equation}\label{WeakIntrinsicRobinCondition}
	\begin{split}
	\rho_1\left({\bm B}_{1,h}\frac{\partial u_{2,h}^{L}}{\partial t},\xi_h  \right)_{\Gamma} +\\
	\left[\rho_2{\left(\frac{\partial u_{2,h}^{L}}{\partial t},\mathcal{L}_{2,h}\xi_h\right)_{\Omega_2,L}} +
	{\left(\beta_2 \nabla u_{2,h}^{L}, \nabla(\mathcal{L}_{2,h}\xi_h) \right)_{\Omega_2}} -
	{\left(f_{2,h},\mathcal{L}_{2,h}\xi_h \right)_{\Omega_2}} \right]
	=\\
	\rho_1\left({\bm B}_{1,h}\frac{\partial u_{1,h}^{L}}{\partial t},\xi_h  \right)_{\Gamma} -\\
	\left[\rho_1\left(\frac{\partial u_{1,h}^{L}}{\partial t}, \mathcal{L}_{1,h}\xi_h  \right)_{\Omega_1,L}+
	\left(\beta_1 \nabla u_{1,h}^{L}, \nabla(\mathcal{L}_{1,h}\xi_h) \right)_{\Omega_1} -
	\left(f_{1,h},\mathcal{L}_{1,h}\xi_h \right)_{\Omega_1}\right].
	\end{split}
	\end{equation}
\end{theorem}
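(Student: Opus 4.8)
The plan is to recognize \textbf{Model $L_h$} as the special case of \textbf{Problem $P_h$} obtained by reimposing the Dirichlet interface constraint, so that Theorem~\ref{theorem1} applies verbatim and the single extra error term simply drops out. No fresh computation is needed beyond what Theorem~\ref{theorem1} already supplies.

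First I would observe that the solution spaces satisfy ${\bm V}_h \subset {\tilde{\bm V}}_h$ and that the defining variational identities (\ref{ModelL}) and (\ref{ModelP}) are literally the same relation, both tested against all $v_h \in {\bm V}_h$. Consequently, any solution $u_h^{L}$ of \textbf{Model $L_h$} is itself an admissible function for \textbf{Problem $P_h$}: it lies in ${\tilde{\bm V}}_h$ and it satisfies (\ref{ModelP}). I may therefore invoke Theorem~\ref{theorem1} with $u_h^{\tilde{L}} = u_h^{L}$, concluding that $u_h^{L}$ satisfies (\ref{WeakIntrinsicRobinConditionPerturbation}).

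The second step is to show that the only term distinguishing (\ref{WeakIntrinsicRobinConditionPerturbation}) from the claimed identity (\ref{WeakIntrinsicRobinCondition}), namely $\rho_1 \left( {\bm B}_{1,h} \frac{\partial (u_{1,h}^{L} - u_{2,h}^{L})}{\partial t}, \xi_h \right)_{\Gamma}$, vanishes. Because $u_h^{L} \in {\bm V}_h$, the Dirichlet interface condition is enforced in the solution space, so $u_{1,h}^{L} = u_{2,h}^{L}$ on $\Gamma$; equivalently the perturbation $e_D = u_{1,h}^{L} - u_{2,h}^{L}$ has vanishing trace on $\Gamma$ at every time $t$. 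Differentiating in time, and using that restriction to $\Gamma$ is linear and time-independent, gives $\frac{\partial e_D}{\partial t} = 0$ on $\Gamma$. Since the interface operator ${\bm B}_{1,h} = \mathcal{L}_{1,h}^{\ast}\mathcal{L}_{1,h}$ depends on its argument only through the trace data in ${\bm \Lambda}_{\Gamma,h}$, it annihilates this vanishing trace, so the pairing is zero for every $\xi_h \in {\bm \Lambda}_{\Gamma,h}$. Dropping it from (\ref{WeakIntrinsicRobinConditionPerturbation}) leaves precisely (\ref{WeakIntrinsicRobinCondition}).

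Given the machinery already in place, I expect no serious obstacle; the argument is essentially a specialization. The one point that demands care is the justification that the enforced constraint really kills the operator term: I must confirm both that time differentiation commutes with restriction to $\Gamma$ (valid for these functions, piecewise linear in space and smooth in time) and that ${\bm B}_{1,h}$ sees its argument only through the interface trace, so that a zero trace forces the whole interface pairing to vanish identically in $\xi_h$.
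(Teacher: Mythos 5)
Your proposal is correct and follows essentially the same route as the paper: the authors likewise treat \textbf{Model $L_h$} as the special case of \textbf{Problem $P_h$} in which the Dirichlet interface condition is enforced in ${\bm V}_h$, so that $e_D=u_{1,h}^{L}-u_{2,h}^{L}$ and hence $\frac{\partial e_D}{\partial t}$ vanish on $\Gamma$ and the extra term in (\ref{WeakIntrinsicRobinConditionPerturbation}) drops out. Your added care in checking that time differentiation commutes with the trace and that ${\bm B}_{1,h}$ sees only the interface data is a slight elaboration of what the paper leaves implicit, but it is the same argument.
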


Mathematically,
(\ref{WeakIntrinsicRobinCondition}) is equivalent to (\ref{ModelL}) in {\bf Model $L_h$} since the additional two terms $\rho_1\left({\bm B}_{1,h}\frac{\partial u_{2,h}^{L}}{\partial t},\xi_h  \right)_{\Gamma}$ and $\rho_1\left({\bm B}_{1,h}\frac{\partial u_{1,h}^{L}}{\partial t},\xi_h  \right)_{\Gamma}$ are canceled each other due to the Dirichlet interface condition being enforced in the solution space. However, they could make significant differences numerically when further approximation is applied for time discretization as well as decoupling.

To further understand the interface mechanism implied by (\ref{WeakIntrinsicRobinCondition}),
let us assume all the regularity requirements, well-posedness, as well as error estimates for the related continuous and semi-discrete models in order to focus on illustrating the derivation and understanding the mechanism of the intrinsic Robin condition to be derived.
Let us also first concentrate on the terms inside the two brackets in (\ref{WeakIntrinsicRobinCondition}), denoted by
\begin{equation}
\begin{split}
F=\\
&\left[
\rho_2{\left(\frac{\partial u_{2,h}^{L}}{\partial t},\mathcal{L}_{2,h}\xi_h\right)_{\Omega_2,L}} +
{\left(\beta_2 \nabla u_{2,h}^{L}, \nabla(\mathcal{L}_{2,h}\xi_h) \right)_{\Omega_2}} -
{\left(f_{2,h},\mathcal{L}_{2,h}\xi_h \right)_{\Omega_2}} \right]
+\\
&\left[
\rho_1\left(\frac{\partial u_{1,h}^{L}}{\partial t}, \mathcal{L}_{1,h}\xi_h  \right)_{\Omega_1, L} +
\left(\beta_1 \nabla u_{1,h}^{L}, \nabla(\mathcal{L}_{1,h}\xi_h) \right)_{\Omega_1} -
\left(f_{1,h},\mathcal{L}_{1,h}\xi_h \right)_{\Omega_1}\right],
\end{split}
\end{equation}
which actually corresponds to all the terms in (\ref{ModelL}) of the lumped-mass model.
Because (\ref{ModelL}) in {\bf Model $L_h$} is an approximation to (\ref{ModelS}) in {\bf Model $S_h$} due to the lumped-mass condensation of the $L^2$ inner product to the interface, we may relate the terms in $F$ back to (\ref{ModelS}) in {\bf Model $S_h$}  by approximating the lumped-mass terms using
$\rho_1\left(\frac{\partial u_{1,h}^{L}}{\partial t}, \mathcal{L}_{1,h}\xi_h  \right)_{\Omega_1}$ and $\rho_2{\left(\frac{\partial u_{2,h}^{L}}{\partial t},\mathcal{L}_{2,h}\xi_h\right)_{\Omega_2}}$ with an error of $O(h^2)$,
where the error between the inner product and its corresponding lumped-mass approximation in ${\bm V}_h$  is $O(h^2)$ as showed in \cite{thomee2006galerkin}. So, we have the estimate for $F$ with the solution of {\bf Model $L_h$}, yet in terms of the expression in {\bf Model $S_h$} as follows.

\begin{equation}\label{New2}
\begin{split}
F=\\
&\left[
\rho_2{\left(\frac{\partial u_{2,h}^{L}}{\partial t},\mathcal{L}_{2,h}\xi_h\right)_{\Omega_2}} +
{\left(\beta_2 \nabla u_{2,h}^{L}, \nabla(\mathcal{L}_{2,h}\xi_h) \right)_{\Omega_2}} -
{\left(f_{2,h},\mathcal{L}_{2,h}\xi_h \right)_{\Omega_2}} \right]
+\\
&\left[
\rho_1\left(\frac{\partial u_{1,h}^{L}}{\partial t}, \mathcal{L}_{1,h}\xi_h  \right)_{\Omega_1} +
\left(\beta_1 \nabla u_{1,h}^{L}, \nabla(\mathcal{L}_{1,h}\xi_h) \right)_{\Omega_1} -
\left(f_{1,h},\mathcal{L}_{1,h}\xi_h \right)_{\Omega_1}\right] + O(h^2).
\end{split}
\end{equation}

Now that {\bf Model $S_h$} is also the standard finite element approximation to {\bf Model C}, while {\bf Model $L_h$} is an approximation to {\bf Model C}, too.
Therefore, by using (\ref{StandardErrorEstimates}) we may approximate $u_{1,h}^{L},~u_{2,h}^{L}$ in (\ref{New2}) by the continuous solution $u_{1}^{C},~u_{2}^{C}$ with the error of $O(h)$ to further relate the expression $F$ to the weak form of the coupled model, yet restricted to the test function space ${\bm V}_h$:
\begin{equation}\label{New3}
\begin{split}
F=\\
&\left[
\rho_2{\left(\frac{\partial u_{2}^{C}}{\partial t},\mathcal{L}_{2,h}\xi_h\right)_{\Omega_2}} +
{\left(\beta_2 \nabla u_{2}^{C}, \nabla(\mathcal{L}_{2,h}\xi_h) \right)_{\Omega_2}} -
{\left(f_{2,h},\mathcal{L}_{2,h}\xi_h \right)_{\Omega_2}} \right]
+\\
&\left[
\rho_1\left(\frac{\partial u_{1}^{C}}{\partial t}, \mathcal{L}_{1,h}\xi_h  \right)_{\Omega_1} +
\left(\beta_1 \nabla u_{1}^{C}, \nabla(\mathcal{L}_{1,h}\xi_h) \right)_{\Omega_1} -
\left(f_{1,h},\mathcal{L}_{1,h}\xi_h \right)_{\Omega_1}\right] + O(h).
\end{split}
\end{equation}

Under sufficient regularity assumptions for the continuous solution $u^{C}$
and following the standard procedure, integrating by parts in (\ref{New3}) and using the strong form of the coupled PDE model leads to
\begin{equation}\label{New4}
F =
{{\left(\beta_2 \nabla u_{2}^{C} \cdot {\bm n}_2, \xi_h \right)_{\Gamma}}}
+
{{\left(\beta_1 \nabla u_{1}^{C} \cdot {\bm n}_1, \xi_h \right)_{\Gamma}}}
+O(h), ~~~ \forall \xi \in \Lambda_h.
\end{equation}
Therefore, we see that $F$ actually corresponds to the Newmann condition in the original interface matching conditions, or physically presenting the jump of the flux across the interface.

Combining (\ref{New4}) with (\ref{WeakIntrinsicRobinCondition}), we get
\begin{equation}\label{New6}
\begin{split}
{\rho_1\left({\bm B}_{1,h}\frac{\partial u_{2,h}^{L}}{\partial t},\xi_h  \right)_{\Gamma}} +
{{\left(\beta_2 \nabla u_{2}^{C} \cdot {\bm n}_2, \xi_h \right)_{\Gamma}}}
=\\
{\rho_1\left({\bm B}_{1,h}\frac{\partial u_{1,h}^{L}}{\partial t},\xi_h  \right)_{\Gamma}} -
{{\left(\beta_1 \nabla u_{1}^{C} \cdot {\bm n}_1, \xi_h \right)_{\Gamma}}}
+O(h).
\end{split}
\end{equation}

Approximating the continuous solution $(u_{1}^{C},~u_{2}^{C})$ in (\ref{New6}) back to the solution $(u_{1,h}^{L},~u_{2,h}^{L})$ of the approximate lumped-mass semi-discrete model, we find that (\ref{New6}) actually implies the following matching condition for $(u_{1,h}^{L},~u_{2,h}^{L})$ on the interface:
\begin{equation}\label{New7}
\begin{split}
&{\rho_1\left({\bm B}_{1,h}\frac{\partial u_{2,h}^{L}}{\partial t},\xi_h  \right)_{\Gamma}} +
{{\left(\beta_2 \nabla u_{2, h}^{L} \cdot {\bm n}_2, \xi_h \right)_{\Gamma^+}}} \\
=
&{\rho_1\left({\bm B}_{1,h}\frac{\partial u_{1,h}^{L}}{\partial t},\xi_h  \right)_{\Gamma}} -
{{\left(\beta_1 \nabla u_{1, h}^{L} \cdot {\bm n}_1, \xi_h \right)_{\Gamma^-}}}
+O(h),~~~ \forall \xi \in \Lambda_h,
\end{split}
\end{equation}
which may be formally written as
\begin{equation}\label{IntrinsicRobinCondition1WithError}
\rho_1{\bm B}_{1,h} \frac{\partial u_{2,h}^{L}}{\partial t} + \beta_2 \nabla u_{2,h}^{L} \cdot {\bm n}_2
= \rho_1{\bm B}_{1,h} \frac{\partial u_{1,h}^{L}}{\partial t} - \beta_1 \nabla u_{1,h}^{L} \cdot {\bm n}_1 + O(h),
~~~{\rm on~} \Gamma.
\end{equation}
Similar to the above argument, if we condense the inner product from $\Omega_2$ to the interface and transmit the Dirichlet data $u_{2,h}$ to the other side of the interface, we may have the following similar interface relation
\begin{equation}\label{IntrinsicRobinCondition2WithError}
\rho_2{\bm B}_{2,h} \frac{\partial u_{2,h}^{L}}{\partial t} + \beta_2 \nabla u_{2,h}^{L} \cdot {\bm n}_2
=
\rho_2{\bm B}_{2,h} \frac{\partial u_{1,h}^{L}}{\partial t} - \beta_1 \nabla u_{1,h}^{L} \cdot {\bm n}_1 + O(h),
~~~{\rm on~} \Gamma.
\end{equation}

First, note that in the above interface conditions (\ref{IntrinsicRobinCondition1WithError}) and (\ref{IntrinsicRobinCondition2WithError}), $\nabla u_{h}^{L}$ itself does not exist on $\Gamma$ because functions in the finite element space
${\bf V}_h$ have discontinuous derivatives on the interface, while the gradient of $u_{i,h}^{L}$ for $i = 1, 2$ is defined in the boundary layer cells in the subdomains on each side of the interface. Therefore, we denote each side of the interface in (\ref{New7}) by $\Gamma^+$ and $\Gamma^-$, respectively, to distinguish the interface values of a discontinuous function restricted from the corresponding interface side.

Most importantly, in contrast to the classical Robin interface condition, which simply takes a linear combination of the original Dirichlet and Neumann interface conditions, as introduced for constructing numerical methods for various decoupling purposes, this new type of interface condition instead combines the Neumann condition with the time derivative of the Dirichlet interface data, which physically describes the total exchange of heat or temperature balances on the interface and is referred to as the so-called intrinsic Robin condition.
When such a scalar model problem is further extended to other applications such as FSI, as pointed out earlier,
the related time derivative interface terms $\rho_1 {\bm B}_{1, h} \frac{\partial }{\partial t}$ in the new type of Robin condition will correspond to certain interface inertial force quantities according to Newton's second law, because the derivative of velocity gives acceleration, multiplying by the "mass" factor $\rho_1 {\bm B}_{1, h}$ then leads to a force.
We therefore may also call this new type of interface condition generally as
{\it the inertial type Robin condition}, because it is the inertial data then, instead of the velocity data, that are combined with the Neumann data from each side of the interface.

Indeed, together with the Dirichlet condition enforced in the solution space ${\bf V}_h$, the classical Robin condition, as well as the inertial type Robin condition (\ref{IntrinsicRobinCondition1WithError}) or (\ref{IntrinsicRobinCondition2WithError}) just introduced above, despite their approximation errors of $O(h)$, are all mathematically equivalent to the original interface condition set consisting of the Dirichlet and Neumann conditions.
However, the classical Robin condition is mathematically introduced by kind of artificially combining the Dirichlet and Neumann conditions to obtain another set of mathematically equivalent interface conditions, yet lack of physical justification.
Notice that, in applications like FSI, the Dirichlet data correspond to velocity, while the Neumann data correspond to interfacial force, therefore, there is a mismatch between these two different types of physical quantities. Such a physical mismatch is then kind of meredied by mathematically introducing a relaxation parameter, which is to be tuned for the purposes of numerical stability and convergence. The optimal relaxation parameter is then compensated for the mathematical effect of the physical mismatch. However, the optimal parameter analysis is often very difficult, even for certain simple model problems. The choice and tuning of the relaxation parameter with the existing numerical methods are practically very difficult and even impossible for most of the real applications, never mentioning the theoretical analysis and understanding of the numerical stability and convergence.

The inertial type of Robin condition, however, results from our error analysis in $Theorem$ \ref{theorem1} where we track how the information and its error are transmitted across the interface. The combination of the Dirichlet data, now through its time derivative instead, with the Neumann data becomes physically meaningful for applications like FSI because it characterizes how the inertial and stress forces should be balanced along the interface, which will in turn guide us in future approximation during time discretization and decoupling in order to maintain numerical stability and convergence properly.

\section{Full Discretization and Decoupling}\label{sec: Algorithm}
Implicit time discretization for the above semi-discrete models will lead to stable and convergent coupled methods. For illustration, let us consider the first order backward Euler scheme, although the discussions may be extended to high order schemes, such as the Crank-Nicolson Scheme. For simplicity, let us also consider linear finite elements in spacial discretization.

The standard finite element semi-discrete model is often applied to derive implicit coupled scheme by enforcing both Dirichlet and Neumann conditions on the current time level. To derive decoupled schemes, one may need to relax either Dirichlet or Neumann condition, or replace them by some other equivalent interface conditions such as the classical Robin condition or our new inertial type Robin condition. {\bf Problem $P_h$} illustrates how the Dirichlet data may be exchanged at certain step with the neighboring subdomain by using the computed data from the other side of the interface with the removal of the Dirichlet constraint from the solution space. For the purpose of relaxing the Neumann constraint at certain step instead of implicitly enforcing the Newmann condition as a natural interface condition imposed by {\bf Model C}, one may take the local PDEs (\ref{Equation1})-(\ref{Equation2}) in the original coupled model, multiplying by the corresponding test functions, integrating by parts, and then exchanging the Newmann information across the interface by applying the Newmann interface condition explicitly, and finally add them together to get a coupled form defined in the tensor product space of $V_1 \times V_2$. This will, of course, result in the corresponding interface terms involving the Neumann data on the other side of the interface, which may later be approximated by using computed information for the purpose of decoupling. For instance, we may formally derive the following quasi-weak formulation:

{\bf Problem EC: A Quasi-Weak Formulation of the Continuous Coupled Model}\\

Find $u_{}^{EC}=(u_{1}^{EC},u_{2}^{EC})\in {\bm V}$, such that
\begin{equation}\label{ModelEC}
\begin{split}
\rho_1\left(\frac{\partial u_{1}^{EC}}{\partial t},v_{1}\right)_{\Omega_1} +
\left(\beta_1 \nabla u_{1}^{EC}, \nabla v_{1} \right)_{\Omega_1} +
\rho_2\left(\frac{\partial u_{2}^{EC}}{\partial t},v_{2}\right)_{\Omega_2} + \\
\left(\beta_2 \nabla u_{2}^{EC}, \nabla v_{2} \right)_{\Omega_2}
= -
\left(\beta_2 \nabla u_{2}^{EC}\cdot {\bm n}_2,  v_{1} \right)_{\Gamma} -
\left(\beta_1 \nabla u_{1}^{EC}\cdot {\bm n}_1,  v_{2} \right)_{\Gamma} +\\
\left(f_{1},v_{1} \right)_{\Omega_1} + \left(f_{2},v_{2} \right)_{\Omega_2},~~ \forall v_{}=(v_{1},v_{2})\in {\bm \tilde{V}}.
\end{split}
\end{equation}

Note that we call this the quasi-weak formulation because the two interface terms
$\left(\beta_2 \nabla u_{2}^{EC}\cdot {\bm n}_2,  v_{1} \right)_{\Gamma}$ and
$\left(\beta_1 \nabla u_{1}^{EC}\cdot {\bm n}_1,  v_{2} \right)_{\Gamma}$ are not defined if $u^{EC}$ is only in the $H^1$ space ${\bm V}$, unless the PDE solution has further regularity of at least $H^{3/2}$ so that the traces of $\nabla u_{1}^{EC}$ and $\nabla u_{2}^{EC}$ may be defined on the interface.
However, the corresponding discrete counterparts are well defined as for the same reason as in the intrinsic or inertial Robin condition.
Its corresponding finite element approximation then reads:

{\bf Problem $ES_h$: A Semi-Discrete Model with Finite Element Approximation in Space}\\

Find $u_{h}^{ES}=(u_{1,h}^{ES},u_{2,h}^{ES})\in {\bm V}_h$, such that
\begin{equation}\label{ModelSP}
\begin{split}
\rho_1\left(\frac{\partial u_{1,h}^{ES}}{\partial t},v_{1,h}\right)_{\Omega_1} +
\left(\beta_1 \nabla u_{1,h}^{ES}, \nabla v_{1,h} \right)_{\Omega_1} +
\rho_2\left(\frac{\partial u_{2,h}^{ES}}{\partial t},v_{2,h}\right)_{\Omega_2} +\\
\left(\beta_2 \nabla u_{2,h}^{ES}, \nabla v_{2,h} \right)_{\Omega_2}
=
-\left(\beta_2 \nabla u_{2,h}^{ES}\cdot {\bm n}_2,  v_{1,h} \right)_{\Gamma^{-}} -
\left(\beta_1 \nabla u_{1,h}^{ES}\cdot {\bm n}_1,  v_{2,h} \right)_{\Gamma^{+}} +\\
\left(f_{1,h},v_{1,h} \right)_{\Omega_1} + \left(f_{2,h},v_{2,h} \right)_{\Omega_2}, ~~~\forall v_{h}=(v_{1,h},v_{2,h})\in \tilde{\bm V}_h.
\end{split}
\end{equation}

For comparison, let us first review a couple of typical decoupled methods by classical decoupling approaches in the literature.
Algorithm \ref{DNScheme} describes the decoupled Dirichlet-Neumann (DN) scheme \cite{lan2017mixed,quarteroni1999domain} based on decoupling
the implicit scheme from {\bf Problem $ES_h$} by applying the Dirichlet (\ref{InterfaceCondition1}) and Neumann interface conditions (\ref{InterfaceCondition2}) at alternating time levels with the use of computed data from the previous step on the other side of the interface during time marching.

\begin{algorithm}[tbhp]
	\caption{Decoupled DN Scheme.}
	\label{DNScheme}
	\begin{algorithmic}
		\State For $n=1,2,3...N$:
		\State 1. Solve local PDE in subdomain $\Omega_1$ with the Dirichlet condition (\ref{InterfaceCondition1}): Given $u_{2,h}^{n-1}$, find $u_{1,h}^n \in {\bm V}_{1,h}$, such that $u_{1,h}^n|_{\Gamma}=u_{2,h}^{n-1}|_{\Gamma}$ and
		$$	
		\begin{aligned}
		\left(\rho_1\frac{u_{1,h}^n - u_{1,h}^{n-1}}{\Delta t},v_{1,h}\right)_{\Omega_1} + \left(\beta_1 \nabla u_{1,h}^n, \nabla v_{1,h} \right)_{\Omega_1}=
		\left(f_{1,h}^n,v_{1,h} \right)_{\Omega_1} ~~~ \forall v_{1,h} \in {\bm V}_{1,h}.
		\label{WeakformDirichlet}
		\end{aligned}
		$$
		
		\State 2. Solve local PDE in subdomain $\Omega_2$ with the Neumann condition (\ref{InterfaceCondition2}): With $\nabla u_{1,h}^{n}$ computed above, find $u_{2,h}^n \in {\bm V}_{2,h}$, such that
		$$	
		\begin{aligned}
		&\left(\rho_2\frac{u_{2,h}^n - u_{2,h}^{n-1}}{\Delta t},v_{2,h}\right)_{\Omega_2} + \left(\beta_2 \nabla u_{2,h}^n, \nabla v_{2,h} \right)_{\Omega_2}
		=\\
		&\left(f_{2,h}^n,v_{2,h} \right)_{\Omega_2} -
		\left(\beta_1 \nabla u_{1,h}^n\cdot {\bm n}_1, v_{2,h} \right)_{\Gamma^{+}},
		%+ \left(q^{n},v_{2,h} \right)_{\Gamma},
        ~~~\forall v_{2,h} \in {\bm V}_{2,h}.
		\label{WeakformNeumann}
		\end{aligned}
		$$
		
	\end{algorithmic}
\end{algorithm}

In the conventional Robin-Robin approach \cite{badia2008fluid,discacciati2007robin}, the equivalent classical Robin-Robin interface conditions are introduced by combining the original Dirichlet and Neumann conditions with two mathematical relaxation parameters $\alpha_1$ and $\alpha_2$ as
\begin{eqnarray}
\beta_1 \nabla u_1 \cdot {\bm n}_1 + \beta_2 \nabla u_2 \cdot {\bm n}_2 + \alpha_1 (u_1 - u_2 )=0\label{RobinCondition1},\\
\beta_1 \nabla u_1 \cdot {\bm n}_1 + \beta_2 \nabla u_2 \cdot {\bm n}_2 + \alpha_2 (u_2 - u_1 )=0\label{RobinCondition2}.
\end{eqnarray}

Algorithm \ref{RRScheme} describes the corresponding decoupled Robin-Robin (RR) scheme by applying the two Robin conditions above at alternating time levels during time marching. As discussed earlier, such an approach is lack of physical justification, and the determination of the optimal relaxation parameters is difficult both theoretically and practically for complicated real applications.

\begin{algorithm}[tbhp]
	\caption{Decoupled RR Scheme.}
	\label{RRScheme}
	\begin{algorithmic}
		\State For $n=1,2,3...N$:
		\State 1. Solve local PDE in subdomain $\Omega_1$ with Robin condition (\ref{RobinCondition1}): Given $u_{2,h}^{n-1}$ and $\nabla u_{2,h}^{n-1}$, find $u_{1,h}^n \in {\bm V}_{1,h}$, such that
		$$	
		\begin{aligned}
		\left(\rho_1\frac{u_{1,h}^n - u_{1,h}^{n-1}}{\Delta t},v_{1,h}\right)_{\Omega_1} + \left(\beta_1 \nabla u_{1,h}^n, \nabla v_{1,h} \right)_{\Omega_1} +
		\left(\alpha_1 u_{1,h}^n, v_{1,h} \right)_\Gamma = \\
		\left(f_{1,h}^n,v_{1,h} \right)_{\Omega_1} +
		\left(\alpha_1 u_{2,h}^{n-1}, v_{1,h} \right)_\Gamma - \left(\beta_2 \nabla u_{2,h}^{n-1}\cdot {\bm n}_{2,h}, v_{1,h} \right)_\Gamma,
		%+\left(q^{n},v_{1,h} \right)_{\Gamma}
		~~~\forall v_{1,h} \in {\bm V}_{1,h}.
		\label{WeakformRobin1}
		\end{aligned}
		$$
		
		\State 2. Solve local PDE in subdomain $\Omega_2$ with Robin condition (\ref{RobinCondition2}): With $u_{1,h}^{n}$ and $\nabla u_{1,h}^{n}$ computed above, find $u_{2,h}^n \in {\bm V}_{2,h}$, such that
		$$	
		\begin{aligned}
		\left(\rho_2\frac{u_{2,h}^n - u_{2,h}^{n-1}}{\Delta t},v_{2,h}\right)_{\Omega_2} + \left(\beta_2 \nabla u_{2,h}^n, \nabla v_{2,h} \right)_{\Omega_2} +
		\left(\alpha_2 u_{2,h}^n, v_{2,h} \right)_\Gamma = \\
		\left(f_{2,h}^n,v_{2,h} \right)_{\Omega_2} +
		\left(\alpha_2 u_{1,h}^{n}, v_{2,h} \right)_\Gamma - \left(\beta_1 \nabla u_{1,h}^n\cdot {\bm n}_{1,h}, v_{2,h} \right)_\Gamma,
		%+\left(q^{n},v_{2,h} \right)_{\Gamma},
		~~~\forall v_{2,h} \in {\bm V}_{2,h}.
		\label{WeakformRobin2}
		\end{aligned}
		$$

	\end{algorithmic}
\end{algorithm}

We now propose our new decoupling approach based on the intrinsic or inertial type Robin condition.
Notice that the decoupled {RR} Algorithm \ref{RRScheme} corresponds to adding the two Dirichlet terms $\alpha_1 (u_{1,h} - u_{2,h}, v_{1,h})_{\Gamma}$ and $\alpha_2 (u_{2,h} - u_{1,h}, v_{2,h})_{\Gamma}$ to {\bf Model $ES_h$} in the classical Robin interface condition to obtain another equivalent problem and then applying the decoupling technique similar to the Dirichlet-Neumann scheme. As discussed in the previous section, the lack of physical justification of the Robin-Robin approach results in both theoretical and practical numerical difficulties.
Mathematically, instead of adding the Dirichlet terms $\alpha_1 (u_{1,h} - u_{2,h}, v_{1,h})_{\Gamma}$ and $\alpha_2 (u_{2,h} - u_{1,h}, v_{2,h})_{\Gamma}$ , one may add any other term on both sides to {\bf Problem $ES_h$} to obtain another equivalent model without changing the solution, for instance the parameters $\alpha_1$ and $\alpha_2$ are generally arbitrary, just like in the classical fixed point iteration where there are infinite many possibilities one might try. The key is whether one may find a particular setting that leads to a better or even possibly optimal numerical algorithm.
Motivated by the interface data transmission analysis in the previous section, for instance an equivalent form (\ref{WeakIntrinsicRobinCondition}) for the lumped-mass finite element semi-discrete approximation, it is suggested that it indeed makes sense both mathematically and physically to instead add the time derivative terms
 $\rho_2\left({\bm B}_{2,h}(\frac{\partial u_{1,h}}{\partial t} - \frac{\partial u_{2,h}}{\partial t}),v_{1,h}  \right)_{\Gamma}$ or $\rho_1\left({\bm B}_{1,h}(\frac{\partial u_{2,h}}{\partial t} - \frac{\partial u_{1,h}}{\partial t}),v_{2,h}  \right)_{\Gamma}$.
This leads to an equivalent semi-discrete problem of {\bf Problem $ES_h$}:

{\bf Problem $ESI_h$: An Equivalent Semi-Discrete Problem based on the Intrinsic or Inertial Robin Conditions}\\
Find $u_{h}^{ESI}=(u_{1,h}^{ESI},u_{2,h}^{ESI})\in {\bm V}_h$, such that
\begin{equation}\label{ModelSI_h}
\begin{split}
\rho_1\left(\frac{\partial u_{1,h}^{ESI}}{\partial t},v_{1,h}\right)_{\Omega_1} + \left(\beta_1 \nabla u_{1,h}^{ESI}, \nabla v_{1,h} \right)_{\Omega_1} +
\rho_2\left(\frac{\partial u_{2,h}^{ESI}}{\partial t},v_{2,h}\right)_{\Omega_2} + \\
\left(\beta_2 \nabla u_{2,h}^{ESI}, \nabla v_{2,h} \right)_{\Omega_2}
+
\rho_2\left({\bm B}_{2,h}(\frac{\partial u_{1,h}^{ESI}}{\partial t} - \frac{\partial u_{2,h}^{ESI}}{\partial t}),v_{1,h}  \right)_{\Gamma}
=\\
-\left(\beta_2 \nabla u_{2,h}^{ESI}\cdot {\bm n}_2,  v_{1,h} \right)_{\Gamma^{-}} -
\left(\beta_1 \nabla u_{1,h}^{ESI}\cdot {\bm n}_1,  v_{2,h} \right)_{\Gamma^{+}} +\\
\left(f_{1,h},v_{1,h} \right)_{\Omega_1} + \left(f_{2,h},v_{2,h} \right)_{\Omega_2}, ~~~\forall v_{h}=(v_{1,h},v_{2,h})\in {\bm \tilde{V}}_h.
\end{split}
\end{equation}

%Find $u_{h}^{ESI}=(u_{1,h}^{ESI},u_{2,h}^{ESI})\in {\bm V}_h$ such that
%\begin{equation}\label{ModelSI_h}
%\begin{split}
%\rho_1\left(\frac{\partial u_{1,h}^{ESI}}{\partial t},v_{1,h}\right)_{\Omega_1} + \left(\beta_1 \nabla u_{1,h}^{ESI}, \nabla v_{1,h} \right)_{\Omega_1} +
%\rho_2\left(\frac{\partial u_{2,h}^{ESI}}{\partial t},v_{2,h}\right)_{\Omega_2} + \left(\beta_2 \nabla u_{2,h}^{ESI}, \nabla v_{2,h} \right)_{\Omega_2} \\
%\rho_1\left({\bm B}_{1,h}(\frac{\partial u_{2,h}^{ESI}}{\partial t} - \frac{\partial u_{1,h}^{ESI}}{\partial t}),v_{2,h}  \right)_{\Gamma}+
%\rho_2\left({\bm B}_{2,h}(\frac{\partial u_{1,h}^{ESI}}{\partial t} - \frac{\partial u_{2,h}^{ESI}}{\partial t}),v_{1,h}  \right)_{\Gamma}\\
%=
%-\left(\beta_2 \nabla u_{2,h}^{ESI}\cdot {\bm n}_2,  v_{1,h} \right)_{\Gamma^{-}} -
%\left(\beta_1 \nabla u_{1,h}^{ESI}\cdot {\bm n}_1,  v_{2,h} \right)_{\Gamma^{+}} +
%\left(f_{1,h},v_{1,h} \right)_{\Omega_1} + \left(f_{2,h},v_{2,h} \right)_{\Omega_2}, ~~~\forall v_{h}=(v_{1,h},v_{2,h})\in {\bm \tilde{V}}_h.
%\end{split}
%\end{equation}

Applying the backward Euler scheme gives an implicit and stable algorithm. However, this stable algorithm is coupled on the interface through the implicit approximation for the time derivative of the
two interface Dirichlet data terms $\frac{\partial u_{1,h}^n}{\partial t}$ and $\frac{\partial u_{2,h}^n}{\partial t}$, as well as the Dirichlet interface condition $u_{1,h}^n = u_{2,h}^n$ enforced
in the solution space.

Our framework described here will then allows one to apply various strategies to further decouple the implicit coupled scheme corresponding to {\bf Problem $ESI_h$}, while depending on the practical physical properties in more general real applications.
For instance, as motivated by the thin-wall FSI application \cite{fernandez2013explicit},
we may keep the implicit approximation for the time derivative of the interface Dirichlet data terms $\frac{\partial u_{1,h}^n}{\partial t}$ and $\frac{\partial u_{2,h}^n}{\partial t}$ on one side,
say $ \frac{\partial u_{1,h}^n}{\partial t}  \approx \frac{\delta u_{1,h}^n}{\delta t}  = \frac{u_{1,h}^n-{ u_{1,h}^{n-1}}}{\Delta t}$, while approximate the other term by using computed data from previous time levels explicitly on the other side of the interface,
say $ \frac{\partial u_{2,h}^n}{\partial t}  \approx \frac{\delta u_{2,h}^{n-1}}{\delta t}  = \frac{u_{2,h}^{n-1}-{ u_{2,h}^{n-2}}}{\Delta t}$.
Furthermore, to decouple the Dirichlet interface condition, we may remove the enforced condition $u_{1,h}^{n} = u_{2,h}^{n}$ in the solution space, while incorporate it into the algorithm by further approximating $u_{1,h}^{n-1}$ in the implicit approximation $ \frac{\delta u_{1,h}^n}{\delta t}  = \frac{u_{1,h}^n- u_{1,h}^{n-1}}{\Delta t}$ by passing the Dirichlet data from the other side with $u_{2,h}^{n-1}$, which implies to explicitly enforce the Dirichlet interface condition on the previous time level instead, namely,
$ \frac{\delta u_{1,h}^n}{\delta t} = \frac{u_{1,h}^n- u_{1,h}^{n-1}}{\Delta t} \approx \frac{u_{1,h}^n- u_{2,h}^{n-1}}{\Delta t}$.
This gives a decoupled intrinsic or inertial Robin-Neumann (iRN) type algorithm:

\begin{algorithm}[tbhp]
	\caption{Decoupled  iRN Scheme.}
	\label{IntrinsicRNSchemeB}
	\begin{algorithmic}
		\State For $n=1,2,3...N$:
		\State 1. Solve local PDE in subdomain $\Omega_1$ with the intrinsic or inertial Robin interface condition: \\
        Given $u_{2,h}^{n-1}$, $u_{2,h}^{n-2}$ and $\nabla u_{2,h}^{n-1}$, find $u_{1,h}^n \in {\bf V}_{1, h}$ such that
		$$	
		\begin{aligned}
		\left(\rho_1\frac{u_{1,h}^n - u_{1,h}^{n-1}}{\Delta t},v_{1,h}\right)_{\Omega_1} + \left(\beta_1 \nabla u_{1,h}^n, \nabla v_{1,h} \right)_{\Omega_1} +
		\left(\rho_2{\bm B}_{2,h} {\frac{\delta u_{1,h}^n}{\delta t}}  , v_{1,h} \right)_\Gamma =\\ \left(f_{1,h}^n,v_{1,h} \right)_{\Omega_1}
		+ \left({\rho_2\bm B}_{2,h} {\frac{\delta u_{2,h}^{n-1}}{\delta t}} , v_{1,h} \right)_\Gamma - \left(\beta_2 \nabla u_{2,h}^{n-1}\cdot {\bm n}_2, v_{1,h} \right)_{\Gamma^{-}},~~~
        \forall v_{1,h} \in {\bm V}_{1,h},
		%\label{WeakformIntrinsicRobinB1}
		\end{aligned}
		$$
		where $ {\frac{\delta u_{1,h}^n}{\delta t}}  = \frac{u_{1,h}^n- u_{1,h}^{n-1}}{\Delta t} \approx \frac{u_{1,h}^n- u_{2,h}^{n-1}}{\Delta t}$
		and
		$ {\frac{\delta u_{2,h}^{n-1}}{\delta t}}  = \frac{ u_{2,h}^{n-1} - u_{2,h}^{n-2} }{\Delta t}$;

		\State 2. Solve local PDE in subdomain $\Omega_2$ with the Neumann condition: \\
        With $\nabla u_{1,h}^{n}$ computed above, find $u_{2,h}^n \in {\bf V}_{2, h} $ such that
		$$	
		\begin{aligned}
		\left(\rho_2\frac{u_{2,h}^n - u_{2,h}^{n-1}}{\Delta t},v_{2,h}\right)_{\Omega_2} + \left(\beta_2 \nabla u_{2,h}^n, \nabla v_{2,h} \right)_{\Omega_2}
		= \\
		\left(f_{2,h}^n,v_{2,h} \right)_{\Omega_2}
		- \left(\beta_1 \nabla u_{1,h}^{n}\cdot {\bm n}_1, v_{2,h} \right)_{\Gamma^{+}},~~~ \forall v_{2,h} \in {\bm V}_{2,h}.
		%\label{WeakformIntrinsicRobinB2}
		\end{aligned}
		$$
		\\
		
	\end{algorithmic}
\end{algorithm}

Note that it is well known from the practical experiences that the numerical instability with decoupled methods is usually caused by making use of the computed Dirichlet data explicitly from the previous time level for decoupling the Dirichlet interface condition. It is the introduction of the time derivative of the Dirichlet data in the intrinsic or inertial Robin interface condition that is not only physically justified, but also mathematically allows for the implicit use of computed Dirichlet data from the previous time level to decouple the Dirichlet condition effectively.

In addition, we have only outlined above the basic principles in our decoupling framework based on the intrinsic or inertial type Robin interface conditions. The ingredients and strategies in a particular decoupled algorithm may vary and be fine tuned, such as whether to apply one inertial or Newmann interface term and how to decouple the Dirichlet and Newmann data by explicit approximation from using computed data, on which side or both sides or on alternating time levels, which would depend on the specific mathematical and physical properties in real applications, such as the ratio of the subdomain sizes, physical parameters like $\rho, \beta$, etc.
For instance, the strategy described in Algorithm \ref{IntrinsicRNSchemeB} works particularly effectively in the thin-wall FSI application, where it is observed that the implicit
interface approximation must be maintained on the bulk fluid side like $\Omega_1$ here, while the interface time derivation may be relaxed by applying an explicit approximation on
the thin-wall structure side for decoupling. Numerical experiments will be designed and conducted in the next section to confirm the effectiveness of the proposed decoupling approach, as well as to investigate some of these questions to provide certain insight and guidance for real applications.

\section{Numerical Experiments}
%\begin{figure}[H]	
%	\centering
%	\subfigure[Symmetric Domain]{\includegraphics[scale=0.40]{Domain1.png}}		
%	\subfigure[Unsymmetric Domain]{\includegraphics[scale=0.40]{Domain2.png}}		
%	\caption{For the symmetric domain (left), let $\Omega_1=(0,0)\times(1,1)$, $\Omega_2=(1,0)\times(2,1)$ and $\Omega=\Omega_1\cup\Omega_2$. For the unsymmetric domain (right), let $\Omega_1=(0,0)\times(1,1)$, $\Omega_2=(1,0)\times(1.1,1)$ and $\Omega=\Omega_1\cup\Omega_2$.}
%	\label{GeometricalDomains}
%\end{figure}

Numerical experiments are conducted to examine the effectiveness of the proposed decoupling approach based on the intrinsic or inertial type Robin interface conditions applied to the model problem in Section 2.
For illustration, let $\beta(x, y) = 2+x^2+y^2$, and the source term is defined such that the exact solution of the coupled model is given by
\begin{equation}
	u=t \sin(2\pi x)\sin(2\pi y).
\end{equation}
The total time is set to be $T=1s$ in all simulation.

We first examine the effectiveness of our new decoupled {iRN} scheme in Algorithm \ref{IntrinsicRNSchemeB} by comparing it with the corresponding implicit coupled scheme that is known to be numerically stable and convergent. For this purpose, we simply set $\rho_1=\rho_2=1$ and $\beta_1=\beta_2=\beta$.
The errors of the corresponding numerical solutions of the coupled and decoupled schemes to the exact solution are compared in Table \ref{IRN_rho_equal_beta_equal} with four levels of mesh refinement, where $\Delta t = h^2$ in order to examine the discretization error in space.
It is clearly observed that our new { iRN} scheme effectively decouples the computation by properly approximating the interface data using the computed solution, while still retains the same order of accuracy with the numerical solutions of the coupled scheme. The order of convergence is numerically verified because the errors are reduced by a factor of $4$ as the mesh size and time step are refined once, which indicates the second order accuracy in space and first order accuracy in time in the $L^2$ norm.
To further examine the stability of the decoupled iRN scheme, we increase $\Delta t$ from $=O(h^2)$ to $=O(h)$ and plot the computed solutions as well as the exact solution at $t=1$ in Figure \ref*{solution_stability_t=h_mesh}, where the stability and accuracy of the decoupled iRN scheme are clearly observed. The decoupled scheme is still numerically stable as the time step size keeps increasing, although the accuracy is reduced accordingly as expected due to bigger discretization error in time as well as in approximating the interface data from previous time levels for the decoupling purpose.

\begin{table}[tbhp]
	\small
	\caption{Errors of $\|u_{h,N}-u_{ext}(T)\|_{0,\Omega}$ with $\rho_1=\rho_2=1,~\beta_1=\beta_2=\beta(x, y),~ \Delta t= h^2.$}
	\label{IRN_rho_equal_beta_equal}
	\begin{center}			
		\begin{tabular}{|c|c|c|}
			\hline
			h									     			     &
			\quad Coupled scheme \quad               				 &
			Decoupled { iRN} scheme	\\
			\hline	
			$~~\frac{1}{8}~~$&
			3.43107e-2&
			3.75366e-2\\
			\hline
			$~~\frac{1}{16}~~$  &
			9.20988e-3&
			1.13574e-2\\
			\hline
			$~~\frac{1}{32}~~$&
			2.34374e-3&
			2.61407e-3\\
			\hline
			$~~\frac{1}{64}~~$&
			5.88545e-4&
			5.77793e-4\\
			\hline
		\end{tabular}
	\end{center}
\end{table}

\begin{figure}[tbhp]
	\centering
	\subfigure[Exact solution]{\includegraphics[scale=0.08]{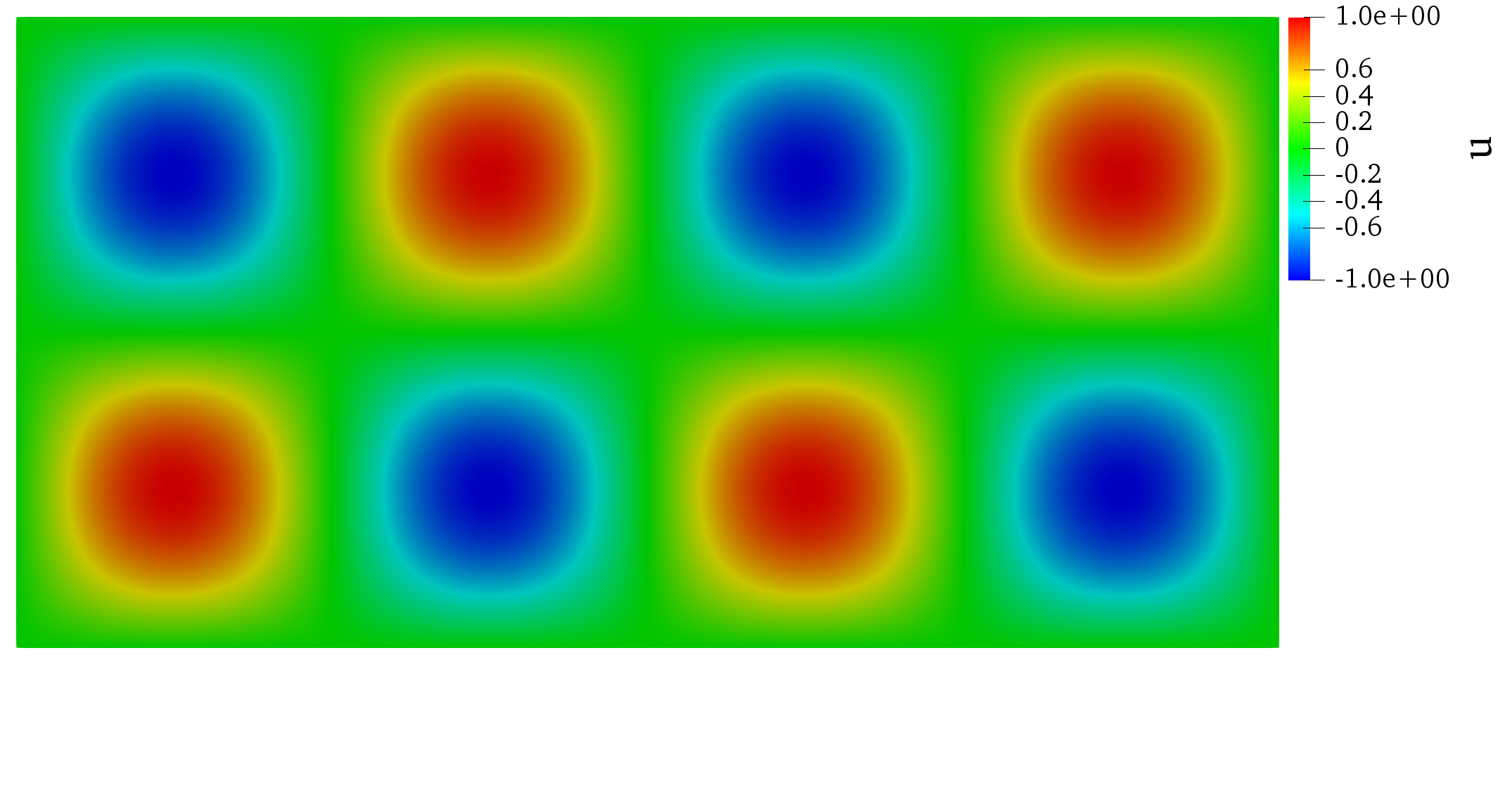}}
	\subfigure[$\Delta t=h=\frac{1}{8}$]{\includegraphics[scale=0.08]{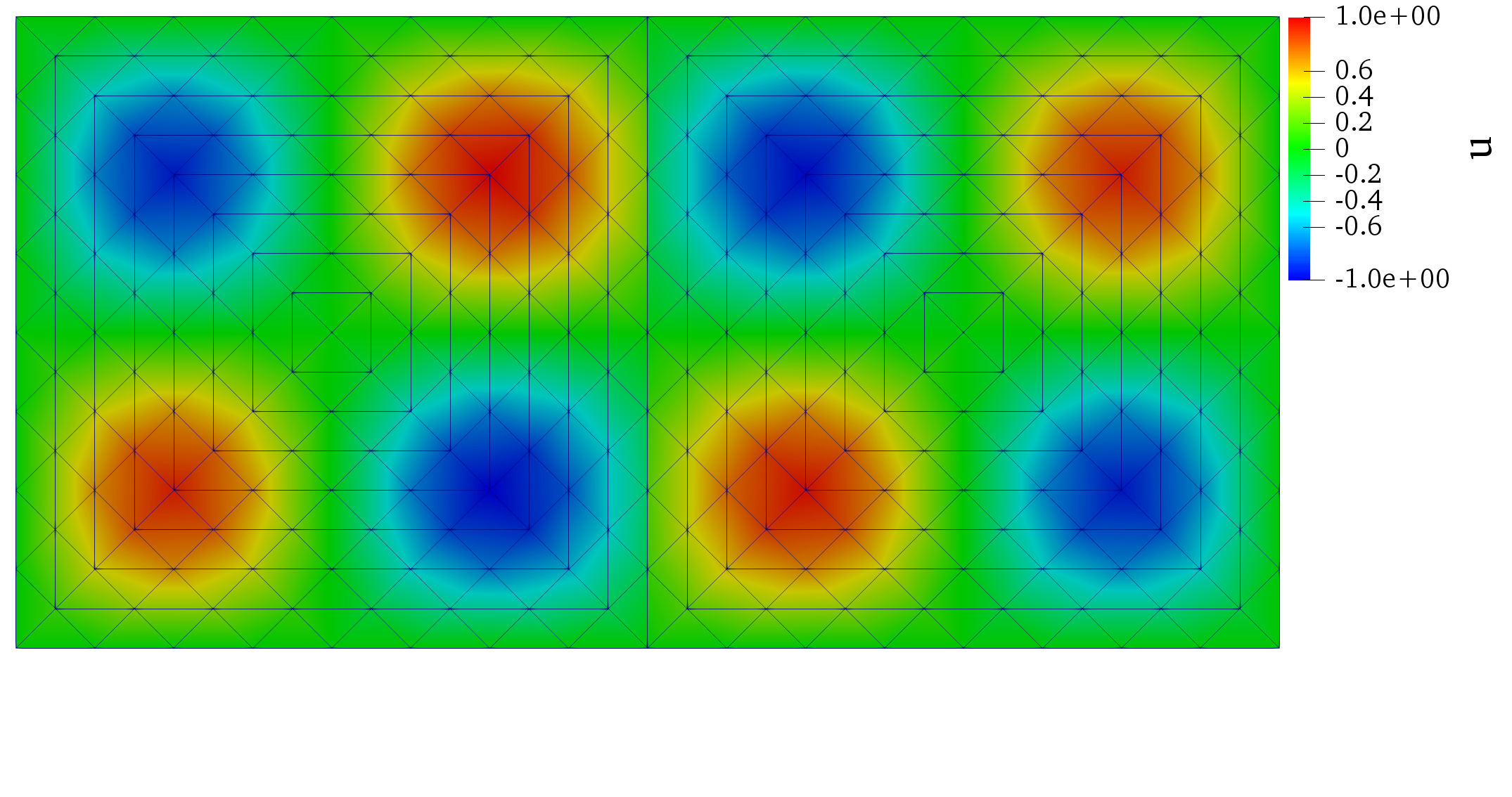}}\\
	\subfigure[$\Delta t=h=\frac{1}{16}$]{\includegraphics[scale=0.08]{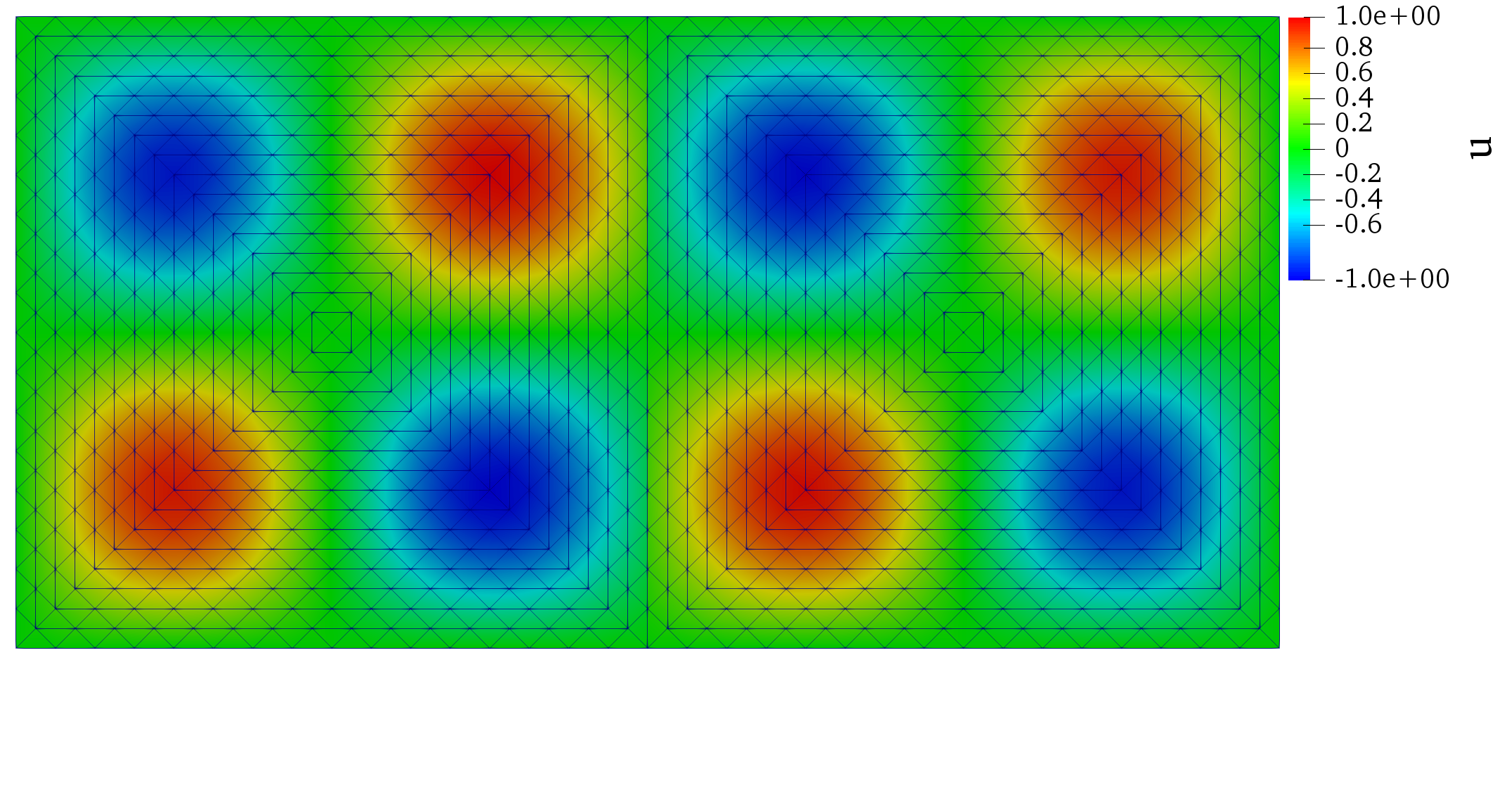}}
	\subfigure[$\Delta t=h=\frac{1}{32}$]{\includegraphics[scale=0.08]{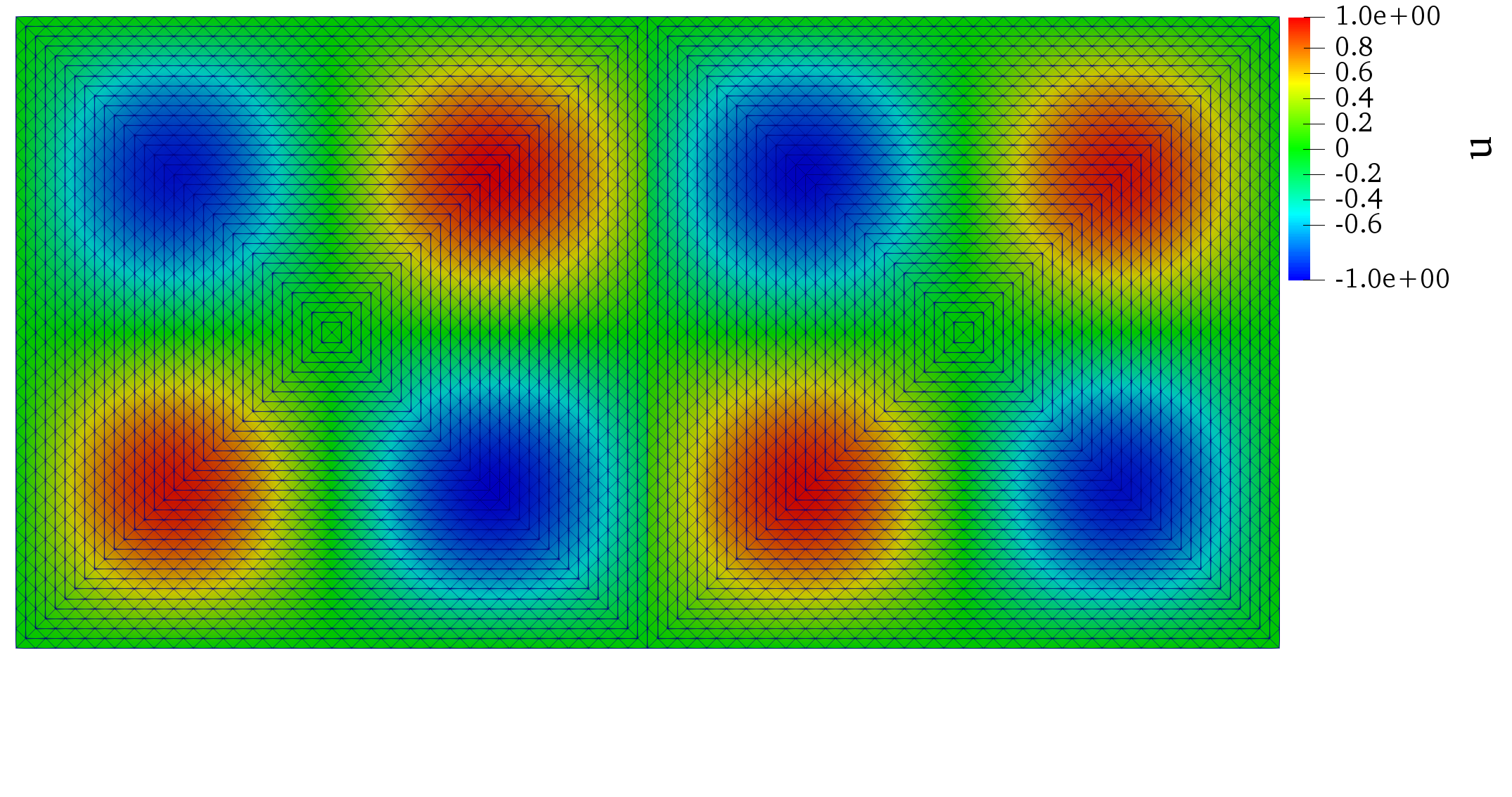}}
	\caption{A comparison of the exact solution and solutions obtained by the decoupled { iRN} scheme at $t=1$ with $\Delta t=h=\frac{1}{8},~\frac{1}{16},~\frac{1}{32}$.}
	\label{solution_stability_t=h_mesh}
\end{figure}

In order to compare with the classical decoupled {DN} and {RR} schemes, we may also consider to apply the corresponding intrinsic or inertial Robin conditions (\ref{IntrinsicRobinCondition1WithError}) and (\ref{IntrinsicRobinCondition2WithError}) symmetrically on both sides of the interface by adding another term $\rho_1\left({\bm B}_{1,h}(\frac{\partial u_{2,h}}{\partial t} - \frac{\partial u_{1,h}}{\partial t}),v_{2,h}\right)_{\Gamma}$ to {\bf Problem $ESI_h$}, which leads to another version of the intrinsic or inertial Robin-Robin{(iRR)} type of decoupled scheme:

\begin{algorithm}[tbhp]
	\caption{Decoupled {iRR} Scheme.}
	\label{IntrinsicRRSchemeB}
	\begin{algorithmic}
		\State For $n=1,2,3...N:$
		\State 1. Solve local PDE in subdomain $\Omega_1$ with the intrinsic or inertial Robin interface condition: \\
		Given $u_{2,h}^{n-1}$, $u_{2,h}^{n-2}$ and $\nabla u_{2,h}^{n-1}$, find $u_{1,h}^n \in {\bf V}_{1, h}$ such that
		$$	
		\begin{aligned}
		\left(\rho_1\frac{u_{1,h}^n - u_{1,h}^{n-1}}{\Delta t},v_{1,h}\right)_{\Omega_1} + \left(\beta_1 \nabla u_{1,h}^n, \nabla v_{1,h} \right)_{\Omega_1} +
		\left(\rho_2{\bm B}_{2,h} \frac{\delta u_{1,h}^n}{\delta t}, v_{1,h} \right)_\Gamma = \\
		\left(f_{1,h}^n,v_{1,h} \right)_{\Omega_1} +
		\left({\rho_2\bm B}_{2,h} \frac{\delta u_{2,h}^{n-1}}{\delta t}  , v_{1,h} \right)_\Gamma - \left(\beta_2 \nabla u_{2,h}^{n-1}\cdot {\bm n}_2, v_{1,h} \right)_{\Gamma^{-}},~~~
		\forall v_{1,h} \in {\bm V}_{1,h},
		%\label{WeakformIntrinsicRobinB1}
		\end{aligned}
		$$
		where $ {\frac{\delta u_{1,h}^n}{\delta t}}  = \frac{u_{1,h}^n- u_{1,h}^{n-1}}{\Delta t} \approx \frac{u_{1,h}^n- u_{2,h}^{n-1}}{\Delta t}$
		and
		$ {\frac{\delta u_{2,h}^{n-1}}{\delta t}}  = \frac{ u_{2,h}^{n-1} - u_{2,h}^{n-2} }{\Delta t}$;
		\State 2. Solve local PDE in subdomain $\Omega_2$ with the intrinsic or inertial Robin condition: \\
		With $u_{1,h}^{n}$, $u_{1,h}^{n-1}$ and $\nabla u_{1,h}^{n-1}$ computed above, find $u_{2,h}^n \in {\bf V}_{2, h} $ such that
		$$	
		\begin{aligned}
		\left(\rho_2\frac{u_{2,h}^n - u_{2,h}^{n-1}}{\Delta t},v_{2,h}\right)_{\Omega_2} + \left(\beta_2 \nabla u_{2,h}^n, \nabla v_{2,h} \right)_{\Omega_2} +
		\left(\rho_1{\bm B}_{1,h} \frac{\delta u_{2,h}^n}{\delta t}, v_{2,h} \right)_\Gamma = \\
		\left(f_{2,h}^n,v_{2,h} \right)_{\Omega_2} +
		\left(\rho_1{\bm B}_{1,h} \frac{\delta u_{1,h}^{n}}{\delta t}, v_{2,h} \right)_\Gamma - \left(\beta_1 \nabla u_{1,h}^n\cdot {\bm n}_1, v_{2,h} \right)_{\Gamma^{+}}, ~~~
		\forall v_{2,h} \in {\bm V}_{2,h},
		%\label{WeakformIntrinsicRobinB2}
		\end{aligned}
		$$
		where $ {\frac{\delta u_{2,h}^n}{\delta t}}  = \frac{u_{2,h}^n- u_{2,h}^{n-1}}{\Delta t} \approx \frac{u_{2,h}^n- u_{1,h}^{n-1}}{\Delta t}$
		and
		$ {\frac{\delta u_{1,h}^{n}}{\delta t}}  = \frac{ u_{1,h}^{n} - u_{1,h}^{n-1} }{\Delta t}$.
	\end{algorithmic}
\end{algorithm}

Similar to Table \ref{IRN_rho_equal_beta_equal},
we compare in Table \ref{table_sym_rho_equal_beta_equal_0} the errors between the exact solution and the numerical solutions of the decoupled { iRR} scheme with those of the classical decoupled {DN} and {RR} schemes, as well as the implicit coupled scheme as the reference.
In terms of effectiveness,
it is still observed that, for simple cases where if different decoupling techniques all work, such as our new { iRR} scheme, as well as the classical {DN} scheme and {RR} scheme with $\alpha_1=10,~\alpha_2=5$, they could have comparable effective performance. However, for the classical RR method, its performance relies on, and is very sensitive to the selection of relaxation parameters as discussed earlier, which would make it difficult for theoretical analysis, and most importantly for its application and extension to complicated real problems. For instance, it is shown in the table that the accuracy gets worse as the parameters are perturbated to $\alpha_1=\alpha_2=1$.
For the classical {DN} approach, although it works pretty well in this simple setting, it will be shown in the next experiment that it may fail, too when physical parameters vary.
In the contrary, our decoupled { iRR} scheme is always robust and parameter-free.
It is also numerically verified that the order of accuracy of the { iRR} scheme is similar to the { iRN} scheme.

\begin{table}[htpt!]
	\caption{Errors of $\|u_{h,N}-u_{ext}(T)\|_{0,\Omega}$ with $\rho_1=\rho_2=1,~\beta_1=\beta_2=\beta(x, y),~ \Delta t= h^2.$}
	\label{table_sym_rho_equal_beta_equal_0}
	\begin{center}		
		\begin{tabular}{|c|c|c|c|}
			\hline
			h									   &
			Coupled scheme~~~~         &
			\multicolumn{2}{|c|}{Decoupled DN scheme}\\
			\hline
			$~~\frac{1}{8}~~$&
			3.43107e-2       &
			\multicolumn{2}{|c|}{3.53891e-2}		 \\
			\hline
			$~~\frac{1}{16}~~$  &
			9.20988e-3          &
			\multicolumn{2}{|c|}{9.46749e-3}		    \\
			\hline
			$~~\frac{1}{32}~~$  &
			2.34374e-3      	&
			\multicolumn{2}{|c|}{2.40475e-3}			\\
			\hline
			\hline
			h									     			     &
			Decoupled { iRR} scheme	&
			\multicolumn{2}{|c|}{Decoupled RR scheme}				 \\
			\hline
			&
			&
			$~\alpha_1 = 10,\alpha_2 = 5~$ &
			$~\alpha_1 = 1,\alpha_2 = 1~$ \\
			\hline	
			$~~\frac{1}{8}~~$&
			3.53436e-2		 &
			3.24908e-2       &
			3.80028e-2       \\
			\hline
			$~~\frac{1}{16}~~$  &
			9.55398e-3		    &
			8.81404e-3          &
			1.39637e-2          \\
			\hline
			$~~\frac{1}{32}~~$  &
			2.25541e-3			&
			2.26911e-3          &
			5.08893e-3  		\\
			\hline
		\end{tabular}
	\end{center}
\end{table}

Let us finally illustrate the effects of physical properties by varying the value of
$\rho$ in different subdomains on different decoupling approaches of the three decoupled schemes as compared in Table \ref{table_sym_rho_equal_beta_equal_0}.
Two cases are tested, where
Table \ref{table_sym_rho_diff1_beta_equal_0} shows the results corresponding to the parameter set $\rho_1=10\rho_2$, while in Table \ref{table_sym_rho_diff2_beta_equal_0}  the parameter ratio is switched by $\rho_2=10\rho_1$.
It is clearly seen that the classical {DN} approach is very sensitive to physical properties and could fail if not applied carefully, while
the decoupled { iRR} scheme always works perfectly.

\begin{table}[H]
	\small
	\caption{Errors of $\|u_{h,N}-u_{ext}(T)\|_{0,\Omega}$ with $\rho_1=10,~\rho_2=1,~\beta_1=\beta_2=\beta(x, y),~ \Delta t= h^2$.}
	\label{table_sym_rho_diff1_beta_equal_0}
	\begin{center}		
		\begin{tabular}{|c|c|c|c|}
			\hline
			h									   &
			Coupled scheme~~~~         &
			\multicolumn{2}{|c|}{Decoupled DN scheme}\\
			\hline
			$~~\frac{1}{8}~~$&
			3.36012e-2       &
			\multicolumn{2}{|c|}{$\infty$}		 \\
			\hline
			$~~\frac{1}{16}~~$  &
			9.01136e-3         &
			\multicolumn{2}{|c|}{$\infty$}		    \\
			\hline
			$~~\frac{1}{32}~~$  &
			2.29268e-3      	&
			\multicolumn{2}{|c|}{$\infty$}			\\
			\hline
			\hline
			h									     			     &
			Decoupled { iRR} scheme	&
			\multicolumn{2}{|c|}{Decoupled RR scheme}				 \\
			\hline
			&
			&
			$~\alpha_1 = 10,\alpha_2 = 5~$ &
			$~\alpha_1 = 1,\alpha_2 = 1~$ \\
			\hline	
			$~~\frac{1}{8}~~$&
			3.24630e-2		 &
			3.18627e-2       &
			3.72954e-2       \\
			\hline
			$~~\frac{1}{16}~~$  &
			8.80706e-3		    &
			8.64307e-3          &
			1.37006e-2          \\
			\hline
			$~~\frac{1}{32}~~$  &
			2.27580e-3			&
			2.26911e-3          &
			5.00114e-3  		\\
			\hline
		\end{tabular}
	\end{center}
\end{table}

\begin{table}[H]
	\small
	\caption{Errors of $\|u_{h,N}-u_{ext}(T)\|_{0,\Omega}$ with $\rho_1=1,~\rho_2=10,~\beta_1=\beta_2=\beta(x, y),~ \Delta t= h^2.$}
	\label{table_sym_rho_diff2_beta_equal_0}
	\begin{center}		
		\begin{tabular}{|c|c|c|c|}
			\hline
			h									   &
			Coupled scheme~~~~         &
			\multicolumn{2}{|c|}{Decoupled DN scheme}\\
			\hline
			$~~\frac{1}{8}~~$&
			3.39101e-2       &
			\multicolumn{2}{|c|}{3.21780e-2}		 \\
			\hline
			$~~\frac{1}{16}~~$  &
			9.09826e-3         &
			\multicolumn{2}{|c|}{9.38034e-3}		    \\
			\hline
			$~~\frac{1}{32}~~$  &
			2.31506e-3      	&
			\multicolumn{2}{|c|}{2.38207e-3}			\\
			\hline
			\hline
			h									     			     &
			Decoupled { iRR} scheme	&
			\multicolumn{2}{|c|}{Decoupled RR scheme}				 \\
			\hline
			&
			&
			$~\alpha_1 = 10,\alpha_2 = 5~$ &
			$~\alpha_1 = 1,\alpha_2 = 1~$ \\
			\hline	
			$~~\frac{1}{8}~~$&
			3.23888e-2		 &
			3.21780e-2      &
			3.76394e-2       \\
			\hline
			$~~\frac{1}{16}~~$  &
			8.82536e-3		    &
			8.72945e-3         &
			1.38221e-2         \\
			\hline
			$~~\frac{1}{32}~~$  &
			2.28889e-3			&
			2.24812e-3          &
			5.04232e-3  		\\
			\hline
		\end{tabular}
	\end{center}
\end{table}

\section{Concluding Remarks}
We have derived an intrinsic or inertial type Robin condition for multi-modeling problems,
which is justified both mathematically and physically in contrast to the classical Robin condition.
Based on this new interface condition, a decoupling approach is presented for devising effective decoupled numerical methods for applications from classical parallel computing to multi-physics applications.
Numerical experiments show the effectiveness and robustness of our decoupling approach, its advantages over the existing decoupling approaches, as well as the promising potential for its application to complicated real problems in science and technology. Theoretical analysis for stability and convergence is under investigation.
	\section*{Acknowledgments}
	The authors' research is supported in part by the Hong Kong RGC Competitive Earmarked Research Grant HKUST16301218 and NSFC (91530319,11772281).
They would also like to thank Mingchao Cai and Yiyi Huang for valuable discussions.
	
	\newpage
	\bibliographystyle{siamplain}
	\bibliography{references}

\begin{thebibliography}{10}

\bibitem{badea2010numerical}
{\sc L.~Badea, M.~Discacciati, and A.~Quarteroni}, {\em Numerical analysis of
  the navier--stokes/darcy coupling}, Numerische Mathematik, 115 (2010),
  pp.~195--227.

\bibitem{badia2008fluid}
{\sc S.~Badia, F.~Nobile, and C.~Vergara}, {\em Fluid--structure partitioned
  procedures based on robin transmission conditions}, Journal of Computational
  Physics, 227 (2008), pp.~7027--7051.

\bibitem{barker2010scalable}
{\sc A.~T. Barker and X.-C. Cai}, {\em Scalable parallel methods for monolithic
  coupling in fluid--structure interaction with application to blood flow
  modeling}, Journal of computational physics, 229 (2010), pp.~642--659.

\bibitem{bukac2016stability}
{\sc M.~Bukac and B.~Muha}, {\em Stability and convergence analysis of the
  extensions of the kinematically coupled scheme for the fluid-structure
  interaction}, SIAM Journal on Numerical Analysis, 54 (2016), pp.~3032--3061.

\bibitem{cai2009numerical}
{\sc M.~Cai, M.~Mu, and J.~Xu}, {\em Numerical solution to a mixed
  navier--stokes/darcy model by the two-grid approach}, SIAM Journal on
  Numerical Analysis, 47 (2009), pp.~3325--3338.

\bibitem{canic2012stability}
{\sc S.~Canic, B.~Muha, and M.~Bukac}, {\em Stability of the kinematically
  coupled$\backslash$beta-scheme for fluid-structure interaction problems in
  hemodynamics}, arXiv preprint arXiv:1205.6887,  (2012).

\bibitem{cao2010coupled}
{\sc Y.~Cao, M.~Gunzburger, F.~Hua, X.~Wang, et~al.}, {\em Coupled stokes-darcy
  model with beavers-joseph interface boundary condition}, Communications in
  Mathematical Sciences, 8 (2010), pp.~1--25.

\bibitem{causin2005added}
{\sc P.~Causin, J.-F. Gerbeau, and F.~Nobile}, {\em Added-mass effect in the
  design of partitioned algorithms for fluid--structure problems}, Computer
  methods in applied mechanics and engineering, 194 (2005), pp.~4506--4527.

\bibitem{crosetto2011fluid}
{\sc P.~Crosetto, P.~Reymond, S.~Deparis, D.~Kontaxakis, N.~Stergiopulos, and
  A.~Quarteroni}, {\em Fluid--structure interaction simulation of aortic blood
  flow}, Computers \& Fluids, 43 (2011), pp.~46--57.

\bibitem{discacciati2009navier}
{\sc M.~Discacciati and A.~Quarteroni}, {\em Navier-stokes/darcy coupling:
  modeling, analysis, and numerical approximation},  (2009).

\bibitem{discacciati2007robin}
{\sc M.~Discacciati, A.~Quarteroni, and A.~Valli}, {\em Robin--robin domain
  decomposition methods for the stokes--darcy coupling}, SIAM Journal on
  Numerical Analysis, 45 (2007), pp.~1246--1268.

\bibitem{fernandez2013explicit}
{\sc M.~A. Fern{\'a}ndez, J.~Mullaert, and M.~Vidrascu}, {\em Explicit
  robin--neumann schemes for the coupling of incompressible fluids with
  thin-walled structures}, Computer Methods in Applied Mechanics and
  Engineering, 267 (2013), pp.~566--593.

\bibitem{fernandez2015generalized}
{\sc M.~A. Fern{\'a}ndez, J.~Mullaert, and M.~Vidrascu}, {\em Generalized
  robin--neumann explicit coupling schemes for incompressible fluid-structure
  interaction: Stability analysis and numerics}, International Journal for
  Numerical Methods in Engineering, 101 (2015), pp.~199--229.

\bibitem{forster2007artificial}
{\sc C.~F{\"o}rster, W.~A. Wall, and E.~Ramm}, {\em Artificial added mass
  instabilities in sequential staggered coupling of nonlinear structures and
  incompressible viscous flows}, Computer methods in applied mechanics and
  engineering, 196 (2007), pp.~1278--1293.

\bibitem{lan2017mixed}
{\sc R.~Lan, P.~Sun, and M.~Mu}, {\em Mixed finite element analysis for an
  elliptic/mixed elliptic interface problem with jump coefficients}, Procedia
  Computer Science, 108 (2017), pp.~1913--1922.

\bibitem{leng2019numerical}
{\sc W.~Leng, C.-S. Zhang, P.~Sun, B.~Gao, and J.~Xu}, {\em Numerical
  simulation of an immersed rotating structure in fluid for hemodynamic
  applications}, Journal of computational science, 30 (2019), pp.~79--89.

\bibitem{mu2007two}
{\sc M.~Mu and J.~Xu}, {\em A two-grid method of a mixed stokes--darcy model
  for coupling fluid flow with porous media flow}, SIAM Journal on Numerical
  Analysis, 45 (2007), pp.~1801--1813.

\bibitem{mu2010decoupled}
{\sc M.~Mu and X.~Zhu}, {\em Decoupled schemes for a non-stationary mixed
  stokes-darcy model}, Mathematics of Computation, 79 (2010), pp.~707--731.

\bibitem{quarteroni1999domain}
{\sc A.~Quarteroni and A.~Valli}, {\em Domain decomposition methods for partial
  differential equations}, no.~BOOK, Oxford University Press, 1999.

\bibitem{riviere2005analysis}
{\sc B.~Rivi{\'e}re}, {\em Analysis of a discontinuous finite element method
  for the coupled stokes and darcy problems}, Journal of Scientific Computing,
  22 (2005), pp.~479--500.

\bibitem{riviere2005locally}
{\sc B.~Rivi{\`e}re and I.~Yotov}, {\em Locally conservative coupling of stokes
  and darcy flows}, SIAM Journal on Numerical Analysis, 42 (2005),
  pp.~1959--1977.

\bibitem{thomee2006galerkin}
{\sc V.~Thom{\'e}e}, {\em Galerkin finite element methods for parabolic
  problems},  (2006).

\bibitem{turek2006proposal}
{\sc S.~Turek and J.~Hron}, {\em Proposal for numerical benchmarking of
  fluid-structure interaction between an elastic object and laminar
  incompressible flow}, in Fluid-structure interaction, Springer, 2006,
  pp.~371--385.

\bibitem{wick2013solving}
{\sc T.~Wick}, {\em Solving monolithic fluid-structure interaction problems in
  arbitrary lagrangian eulerian coordinates with the deal. ii library}, Archive
  of Numerical Software, 1 (2013), pp.~1--19.

\end{thebibliography}
\end{document}